\renewcommand*{\backrefalt}[4]{%
  \ifcase #1 %
    (Not cited.)%
  \or
    (Cited on page~\hyperlink{page.#2}{#2}.)%
  \else
    (Cited on pages~\hyperlink{page.#2}{#2}.)%
  \fi
}
\newcommand{\mysubjclass}{\textup{2010} \textbf{Mathematics Subject Classification:} 53C26, 22E25, 53C29, 53C55, 17B05 }
\newcommand{\mykeywords}{\textbf{Keywords:} Hypercomplex structure, 2-step nilpotent Lie algebra, 2-step nilmanifold, Obata connection, holonomy}
\setlist[enumerate,1]{label=\textbf{\arabic*.}, ref=\arabic*, leftmargin=*, itemsep=0.5ex, topsep=0.5ex}
\renewenvironment{proof}[1][\proofname]{%
  \par\pushQED{\qed}%
  \normalfont\topsep6\p@\@plus6\p@\relax
  \trivlist
  \item[\hskip\labelsep
        \textbf{\textup{#1}}\@addpunct{.}]\ignorespaces
}{%
  \popQED\endtrivlist\@endpefalse
}
\theoremstyle{plain} 
\newtheorem{theorem}{Theorem}[section]
\newtheorem{lemma}[theorem]{Lemma}
\newtheorem{proposition}[theorem]{Proposition}
\newtheorem{corollary}[theorem]{Corollary}
\newtheorem{conjecture}[theorem]{Conjecture}
\newtheorem{question}[theorem]{Question}
\theoremstyle{definition}
\newtheorem{definition}[theorem]{Definition}
\newtheorem{example}[theorem]{Example}
\newtheorem{remark}[theorem]{Remark}
\numberwithin{equation}{section}
\def\z{\mathfrak{z}}
\def\k{\mathfrak{k}}
\def\l{\mathfrak l}
\def\g{\mathfrak{g}}
\def\h{\mathfrak{h}}
\def\n{\mathfrak{n}}
\def\a{\mathfrak{a}}
\def\hcx{\{J_{\alpha}\}}
\def\R{\mathbb{R}}
\def\N{\mathbb{N}}
\def\H{\mathbb H}
\def\al{\alpha}
\def\ad{\operatorname{ad}}
\def\alt{\raise1pt\hbox{$\bigwedge$}}
\def\eqref#1{(\ref{#1})}
\title{\textbf{Holonomy of the Obata connection on 2-step hypercomplex nilmanifolds}}
\author{Adrián Andrada, María Laura Barberis, and Beatrice Brienza}
\date{}
\begin{document}

\maketitle
\begin{abstract}
We study the holonomy of the Obata connection on 2-step hypercomplex nilmanifolds. By explicitly computing the curvature tensor, we determine the conditions under which the Obata connection is flat, showing that this depends on the nilpotency step of each complex structure. In particular, we show that for 2-step hypercomplex nilmanifolds the holonomy algebra of the Obata connection is always an abelian subalgebra of \(\mathfrak{sl}(n, \mathbb{H})\) and we prove that the \(\mathbb{H}\)-solvable conjecture holds in this case. Furthermore, we provide new examples of $k$-step nilpotent hypercomplex nilmanifolds, with arbitrary $k$, which are not Obata flat.
\end{abstract}
\thispagestyle{firstpagefooter}

\section{Introduction}
Nilmanifolds, that is, compact quotients $\Gamma \backslash G$ of connected and simply connected nilpotent Lie groups $G$ by discrete subgroups $\Gamma$, have played a fundamental role in differential geometry and topology since the seminal works of Mal’cev and Mostow. They provide a natural class of spaces where algebraic and geometric properties interact tightly, offering a fair balance between algebraic manageability and rich geometric behaviour. In low dimensions, the latter allows classification results for various geometric structures (e.g., \cite{Sal,FPS, Ug}), especially when these are left-invariant. \par
Invariant complex structures have been particularly investigated, largely due to the foundational work of Salamon \cite{Sal}, where it is shown that the existence of invariant complex structures $J$ on nilmanifolds admits special features: e.g., the existence of a so-called Salamon basis \cite[Theorem 1.3]{Sal}, and the fact that the subspace \([\mathfrak{g}, \mathfrak{g}] + J[\mathfrak{g}, \mathfrak{g}]\) is always a proper ideal of the Lie algebra \(\mathfrak{g} = \operatorname{Lie}(G)\) \cite[Corollary 1.4]{Sal}. \par
In the hypercomplex setting, many of these nice behaviours fail to carry over in a unified way. Each complex structure \(J_\alpha\) (\(\alpha = 1, 2, 3\)) may satisfy suitable compatibility conditions individually, but it is not clear whether these structures collectively induce analogous global properties. For instance, it is natural to ask whether the subspace
\[
\mathfrak{g}_1^{\mathbb{H}} := [\mathfrak{g}, \mathfrak{g}] + J_1[\mathfrak{g}, \mathfrak{g}] + J_2[\mathfrak{g}, \mathfrak{g}] + J_3[\mathfrak{g}, \mathfrak{g}]
\]
is a proper ideal of $\mathfrak{g}$. This is an open question and has been explicitly conjectured in \cite{Gor1,Gor2}. The difficulty in addressing this problem reflects the more rigid nature of hypercomplex geometry compared to the complex case.  \par
A central geometric object in this setting is the Obata connection: the unique torsion-free connection that preserves all three complex structures. Its holonomy group $\operatorname{Hol}(\nabla)$ lies in $\operatorname{GL}(n, \mathbb{H})$ and plays a crucial role in understanding the geometry of hypercomplex manifolds \cite{IP, SV}. In general, explicit descriptions of $\operatorname{Hol}(\nabla)$ are rare (see, for instance, \cite{Sol,AGT,BFGV}). For nilmanifolds, however, it is known that the holonomy is contained in the commutator subgroup $\operatorname{SL}(n, \mathbb{H})$ \cite[Corollary~3.3]{BDV}. \par
In this article, we investigate left-invariant hypercomplex structures on $2$-step nilpotent Lie groups. Surprisingly, all known examples in this class are Obata-flat. \par
This naturally leads to the question: \emph{Are all 2-step nilpotent hypercomplex Lie algebras Obata-flat?} We answer this in the negative and clarify the role of the nilpotency step of each complex structure (see Sections \ref{sec:preliminaries} and \ref{sec:obatacurvature} for details). In particular, we show that if the Obata connection is not flat, then each $J_\alpha$ must be 3-step nilpotent. No such examples had been previously known. \par
In Section~\ref{sec:examples}, we construct examples of 2-step nilpotent hypercomplex Lie algebras in which one, two, or all three complex structures are 3-step nilpotent. Among these, Example~\ref{ex:nonflat} provides the first known example of a 2-step nilpotent hypercomplex Lie algebra with non-flat Obata connection. We also provide new examples of $k$-step nilpotent hypercomplex nilmanifolds, with arbitrary $k$, which are not Obata flat.\par
Turning to structural properties, we show the Gorgynian conjecture for $2$-step hypercomplex Lie algebras (Theorem~\ref{thm:Hsolvable}). \par
In Section \ref{section:holonomy}, we prove that the holonomy algebra of the Obata connection is always abelian for $2$-step nilpotent hypercomplex Lie groups, and we describe it explicitly (Theorem \ref{thm:holonomy}). This reveals a surprising rigidity: even when the Obata connection is non-flat, the holonomy remains severely restricted. We further show that the Obata holonomy group of a hypercomplex nilmanifold which is at most 2-step is trivial if and only if the nilmanifold is a torus (Theorem \ref{thm:trivialhol}). \par
In the final part of the paper, we extend our analysis to the $3$-step nilpotent case, showing that, under suitable conditions, the holonomy of the Obata connection remains abelian (Proposition~\ref{prop:holonomy2}). \\ \ \\
\noindent \textbf{Acknowledgments.} A.\ Andrada and M.\ L.\ Barberis were partially supported by CONICET, SECyT-UNC and ANPCyT (Argentina). B. Brienza was partially supported by Project PRIN 2022 “Real and complex manifolds: Geometry and Holomorphic Dynamics”, by GNSAGA (Indam) and by the INdAM - GNSAGA Project CUP E53C24001950001. The third author is grateful to A. Fino, G. Gentili, G. Gorginyan, and M. Verbitsky for many helpful discussions, and to the FAMAF for the warm hospitality during the preparation of this work. The authors would like to thank the anonymous referee for their detailed review of the manuscript.

\section{Preliminaries} \label{sec:preliminaries}
A \emph{complex structure} on a differentiable manifold $M$ is an automorphism $J$ of the tangent bundle $TM$ satisfying $J^{2} = -I$ and the integrability condition $N_J(X,Y) = 0$ for all vector fields $X,Y$ on $M$, where $N_J$ denotes the Nijenhuis tensor defined by
\[
N_J(X,Y) = [X,Y] + J([JX,Y] + [X,JY]) - [JX,JY].
\]
The integrability of $J$ is equivalent to the existence of an atlas on $M$ whose transition functions are holomorphic maps \cite{NN}. \par
A \emph{hypercomplex structure} on $M$ is given by a triple of complex structures $\{J_\alpha\}$, $\alpha=1,2,3$, satisfying the quaternionic relations
\begin{equation}\label{quat}
J_1 J_2 = -J_2 J_1 = J_3.
\end{equation}
As a consequence, $M$ admits a 2-sphere family of complex structures $J_y = y_1 J_1 + y_2 J_2 + y_3 J_3,$
parametrized by points $y = (y_1,y_2,y_3)$ in the unit sphere $S^2 \subset \mathbb{R}^3$. It follows from \eqref{quat} that at each point $p \in M$, the tangent space $T_p M$ carries a natural structure of a module over the quaternions $\mathbb{H}$. In particular, $\dim M \equiv 0 \; \pmod{4}$. \par
Given a hypercomplex structure $\{J_\alpha\}$ on a manifold $M$, there exists a unique torsion-free connection $\nabla$ on $M$ that preserves each complex structure, i.e., satisfies $\nabla J_\alpha = 0$ for $\alpha = 1,2,3$. This connection, known as the Obata connection (see \cite{Ob,Sol}), was shown in \cite{Sol} to admit the following explicit expression:
\begin{equation}\label{eqn:Obata}
\nabla_X Y = \frac{1}{2} \bigl( [X,Y] + J_1 [J_1 X, Y] - J_2 [X, J_2 Y] + J_3 [J_1 X, J_2 Y] \bigr).
\end{equation}
More generally, by applying cyclic permutations $(\alpha, \beta, \gamma)$ of $(1,2,3)$, the Obata connection can also be written as:
\begin{equation}\label{eqn:obata-alpha}
\nabla_X Y = \frac{1}{2} \bigl( [X,Y] + J_\alpha [J_\alpha X, Y] - J_\beta [X, J_\beta Y] + J_\gamma [J_\alpha X, J_\beta Y] \bigr).
\end{equation}
It is known that $(M,\hcx)$ is quaternionic (i.e., locally modelled over $\H^n$) if and only if the Obata connection $\nabla$ is flat \cite[Theorem 11.2]{Ob}. This highlights a fundamental difference with the complex case. For instance, the $K3$ surfaces and the homogeneous spaces $\operatorname{SU}(3)$ and $\operatorname{SO}(6)/\operatorname{SU}(2)$ provide examples of  hypercomplex but not quaternionic manifolds \cite{Boy,Joy}. \par
Let $G$ be a Lie group with Lie algebra $\mathfrak{g}$. A hypercomplex structure $\{J_\alpha\}$ on $G$ is said to be \emph{left-invariant} if left translations by elements of $G$ are hyperholomorphic, i.e., holomorphic with respect to each $J_\alpha$. In this case, the structure is completely determined by its value at the identity, which corresponds to a hypercomplex structure on $\mathfrak{g}$. \par
If $\Gamma$ is a lattice in $G$, any left-invariant hypercomplex structure on $G$ descends to a hypercomplex structure on the compact quotient $\Gamma \backslash G$, which is then called \emph{invariant}. In this setting, the natural projection $G \to \Gamma \backslash G$ is hyperholomorphic. \par
We recall the following definition.
\begin{definition}
A Lie algebra $\mathfrak{g}$ is said to be \emph{nilpotent} if its lower central series
\[
\mathfrak{g} = \mathfrak{g}^{0} \supset \mathfrak{g}^{1} \supset \cdots \supset \mathfrak{g}^{k} \supset \cdots,
\]
defined recursively by $\mathfrak{g}^{k} = [\mathfrak{g}, \mathfrak{g}^{k-1}]$ for $k \geq 1$, eventually becomes zero; that is, there exists some $k \in \mathbb{N}$ such that
\[
\mathfrak{g}^{k} = 0.
\]
The smallest such integer $k$ is called the \emph{nilpotency step} of $\mathfrak{g}$. 
\end{definition}

\begin{remark}
Equivalently, a Lie algebra \(\mathfrak{g}\) is nilpotent if its upper central series
\[
0 = \mathfrak{g}_{0} \subset \mathfrak{g}_{1} \subset \cdots \subset \mathfrak{g}_{k} \subset \cdots,
\]
defined recursively by
\[
\mathfrak{g}_{k} = \{ X \in \mathfrak{g} \mid [X,\mathfrak{g}] \subset \mathfrak{g}_{k-1} \}, \qquad k \geq 1,
\]
eventually coincides with \(\mathfrak{g}\).
Moreover, the smallest integer \(k\) such that \(\mathfrak{g}_{k} = \mathfrak{g}\) coincides with the nilpotency step defined via the lower central series.
\end{remark}

Accordingly, a Lie algebra $\mathfrak{g}$ is said to be \emph{2-step nilpotent} if its commutator ideal $\mathfrak{g}^{1} = [\mathfrak{g}, \mathfrak{g}]$ lies in the center $\mathfrak{z}$ of $\mathfrak{g}$. A Lie group $N$ is \emph{2-step nilpotent} if its Lie algebra $\mathfrak{g}$ is 2-step nilpotent.
\begin{definition}
A \emph{2-step nilmanifold} is a compact quotient $\Gamma \backslash N$, where $N$ is a connected, simply connected 2-step nilpotent Lie group and $\Gamma \subset N$ is a lattice.
\end{definition}
It is well known that the existence of a lattice for a connected and simply connected nilpotent Lie group $N$ is equivalent to the existence of a rational basis of its Lie algebra $\mathfrak{g}$ \cite{Mal}, that is, a basis of $\g$ for which the structure constants are rational numbers. \par
Let $(\mathfrak{g},J)$ be a nilpotent Lie algebra equipped with a complex structure $J$. In this context, it is natural to consider an ascending series adapted to $J$.
\begin{definition} [\cite{CFGU}]
The ascending series $\{\a_k\}_{k \ge 0}$ (compatible with $J$) of $\g$ is defined inductively by
\begin{equation} \label{eqn:ascending}
 \a_0 = 0,\quad  \a_k := \{X\in \g \ | \ [X,\g] \subset \a_{k-1} \ \  \text{and} \ \ [JX,\g] \subset
 \a_{k-1}\}, \ k\ge 1.  
\end{equation}
\end{definition}
The complex structure $J$ is said to be \emph{nilpotent} if there exists a $k \in \N$ such that $\a_k=\g$. The smallest such $k$ is called the \emph{nilpotency step of $J$}. \par
If $\g$ is $2$-step nilpotent, then any complex structure $J$ is nilpotent \cite[Proposition 3.3]{Rol}, and the nilpotency step of $J$ is either $2$ or $3$ \cite[Theorem 1.3]{GZZ}. We recall from \cite{B3} the following lemma.
\begin{lemma}[{\cite[Lemmas 3.3 and 4.1]{B3}}] \label{lem:nilpotencystep}
Let $\g$ be a $2$-step nilpotent Lie algebra endowed with a complex structure $J$. Then \item[]
\begin{enumerate} 
    \item [$(1)$] $J$ is $2$-step nilpotent if and only if $J \g^1 \subset \z$,
    \item [$(2)$]$J$ is $3$-step nilpotent if and only if there exists $X \in \g^1$ such that $JX \notin \z$. 
    \item [$(3)$]$J\g^1$ is an abelian ideal of $\g$. 
\end{enumerate}
\end{lemma}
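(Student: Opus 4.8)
The plan is to extract all three statements from the first two terms $\a_1,\a_2$ of the $J$-adapted ascending series of \eqref{eqn:ascending}, invoking $2$-step nilpotency to control the brackets via $[X,\g]\subset\g^1\subset\z$, and calling on the integrability of $J$ only for part $(3)$. First I would compute $\a_1$: since $\a_0=0$, the defining condition forces $\a_1=\{X\in\g : [X,\g]=0 \text{ and } [JX,\g]=0\}=\z\cap J\z$, where I use $J^{-1}=-J$ to identify $\{X : JX\in\z\}$ with $J\z$. Next I would determine when $\a_2=\g$. For every $X\in\g$ the $2$-step hypothesis gives $[X,\g]\subset\g^1\subset\z$, and since $J$ is bijective the two defining conditions for $\a_2$ both collapse to the single requirement $\g^1\subset\a_1=\z\cap J\z$; as $\g^1\subset\z$ already holds, this is equivalent to $\g^1\subset J\z$, that is, to $J\g^1\subset\z$. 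Recalling from \cite{GZZ} that the nilpotency step of $J$ is either $2$ or $3$, statement $(1)$ is then immediate, since $J$ is $2$-step nilpotent if and only if $\a_2=\g$ if and only if $J\g^1\subset\z$. Statement $(2)$ follows by negation, because $J$ is $3$-step precisely when it is not $2$-step: this occurs if and only if $J\g^1\not\subset\z$, i.e. if and only if there is some $X\in\g^1$ with $JX\notin\z$.

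For statement $(3)$ the new ingredient is the vanishing of the Nijenhuis tensor $N_J$. To see that $J\g^1$ is abelian I would evaluate $N_J(X,Y)=0$ on $X,Y\in\g^1\subset\z$: the three brackets $[X,Y]$, $[JX,Y]$ and $[X,JY]$ all vanish because $X$ and $Y$ are central, leaving $[JX,JY]=0$. To see that $J\g^1$ is an ideal I would evaluate $N_J(X,W)=0$ for $X\in\g^1\subset\z$ and arbitrary $W\in\g$: the terms $[X,W]$ and $[X,JW]$ vanish, yielding $[JX,JW]=J[JX,W]$. Since $\g$ is $2$-step we have $[JX,JW]\in\g^1$, hence $J[JX,W]\in\g^1$; applying $J^{-1}=-J$ and using $J^{-1}\g^1=J\g^1$ gives $[JX,W]\in J\g^1$, which is exactly the condition $[\g,J\g^1]\subset J\g^1$.

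I expect the only genuinely non-formal step to be the ideal property in $(3)$: parts $(1)$ and $(2)$ are bookkeeping with the ascending series once $\a_1$ and $\a_2$ are identified, whereas closing $J\g^1$ under the bracket with all of $\g$ requires combining integrability with the quaternionic identity $J^2=-I$ to promote membership in $\g^1$ into membership in $J\g^1$.
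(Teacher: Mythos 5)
Your proposal is correct. For part (3) --- the only part the paper actually proves --- your argument is essentially the paper's: both evaluate $N_J=0$ with one argument central, obtaining the identity $[JY,X]=-J[JY,JX]$ for $Y\in\g^1$, $X\in\g$ (your $[JX,JW]=J[JX,W]$ is the same identity rearranged), from which the ideal property follows because $[JY,JX]\in\g^1$, and abelianness follows because $JX\in\g^1\subset\z$ whenever $X\in J\g^1$. For parts (1) and (2) the paper gives no proof at all, citing \cite{B3} instead; your ascending-series computation --- identifying $\a_1=\z\cap J\z$, observing that $\a_2=\g$ iff $\g^1\subset\a_1$ iff $J\g^1\subset\z$ (using bijectivity of $J$ to collapse the two defining conditions), and then invoking the step-$2$-or-$3$ dichotomy of \cite{GZZ} to deduce (2) by negation --- is a correct, self-contained substitute for that citation, and is the natural argument from the definition \eqref{eqn:ascending}. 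Two cosmetic remarks: the containment $[JX,JW]\in\g^1$ holds for any bracket by definition of $\g^1$, so the $2$-step hypothesis is not what is being used there; and $J^2=-I$ is just the complex-structure identity, not a ``quaternionic'' one. Neither affects the validity of the proof.
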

\begin{proof}
We include the proof of Statement (3) for further purposes. The proof is an application of the integrability of $J$. For any $X \in \g$ and $Y \in \g^1$, $N_{J}=0$ implies that 
\begin{equation} \label{eqn:integrability2}
[J Y,X]=-J[J Y,J X].
\end{equation}
In particular, if $X \in J \g^1$, then $[J Y, X]=0$, as $\g^1 \subset \z$. 
\end{proof}

\begin{remark}
    Recall that a complex structure $J$ on a Lie algebra $\g$ is called \textit{abelian} if it satisfies $[JX,JY]=[X,Y]$ for all $X,Y\in \g$ (see \cite{BDM}). In the case when $\g$ is 2-step nilpotent, an abelian complex structure is $2$-step nilpotent, since the center is $J$-invariant.\par
    A hypercomplex structure $\hcx$ on a Lie algebra $\g$ is called abelian if $J_\alpha$ is abelian for some $\alpha=1,2,3$, and it follows from \cite[Lemma 3.1]{DF2} that $J_y$ is abelian for each $y\in S^2$.
\end{remark}
\section{Computation of the Obata curvature} \label{sec:obatacurvature}
Let $(\g,\hcx)$ be a $2$-step nilpotent hypercomplex Lie algebra. As already observed, the commutator ideal $\g^1=[\g,\g]$ lies in the center $\mathfrak{z}$ of $\g$. This special feature allows for an explicit computation of the curvature tensor of the Obata connection.
\begin{proposition} \label{prop:obatacurvature}
Let $(\g,\hcx)$ be a $2$-step nilpotent hypercomplex Lie algebra. Then, for any $X,Y,Z \in \g$ the curvature tensor of the Obata connection is given by 
\begin{align} \label{eqn:obatacurvature}
4R(X,Y)Z=&-J_\beta[X,J_\beta[Y,Z]]+J_\gamma [J_\alpha X,J_\beta[Y,Z]] \nonumber \\ \nonumber
&-J_\gamma [X,J_\beta[J_\alpha Y,Z]]-J_\beta[J_\alpha X,J_\beta [J_\alpha Y,Z]]\\ \nonumber
&-[X,J_\beta [Y,J_\beta Z]]-J_\alpha[J_\alpha X,J_\beta[Y,J_\beta Z]]           \\ \nonumber
&+J_\alpha[X,J_\beta [J_\alpha Y,J_\beta Z]]-[J_\alpha X,J_\beta [J_\alpha Y, J_\beta Z]]\\ \nonumber
&+J_\beta[Y,J_\beta[X,Z]]-J_\gamma [J_\alpha Y,J_\beta[X,Z]]\\
&+J_\gamma [Y,J_\beta[J_\alpha X,Z]]+J_\beta[J_\alpha Y,J_\beta [J_\alpha X,Z]]\\ \nonumber
&+[Y,J_\beta [X,J_\beta Z]]+J_\alpha[J_\alpha Y,J_\beta[X,J_\beta Z]]\\ \nonumber
&-J_\alpha[Y,J_\beta [J_\alpha X,J_\beta Z]]+[J_\alpha Y,J_\beta [J_\alpha X, J_\beta Z]]\\ \nonumber
&-2 [J_\beta[X,Y],J_\beta Z]+2J_\gamma [J_\beta[X,Y],J_\alpha Z],
\end{align}
where $(\alpha,\beta,\gamma)$ is any cyclic permutation of $(1,2,3)$. 
\end{proposition}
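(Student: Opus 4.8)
The plan is to compute the curvature directly from its definition
\[
R(X,Y)Z=\nabla_X\nabla_Y Z-\nabla_Y\nabla_X Z-\nabla_{[X,Y]}Z,
\]
feeding in the explicit formula \eqref{eqn:obata-alpha} and exploiting at every turn that $\g^1=[\g,\g]\subset\z$. Since $R$ is antisymmetric in $X,Y$, it is enough to expand $\nabla_X\nabla_Y Z$ and $\nabla_{[X,Y]}Z$; the term $-\nabla_Y\nabla_X Z$ is then obtained by interchanging $X\leftrightarrow Y$ and reversing signs, which accounts for lines 5--8 of \eqref{eqn:obatacurvature} once lines 1--4 are known. First I would write $\nabla_Y Z=\tfrac12(A_0+A_1+A_2+A_3)$ with $A_0=[Y,Z]$, $A_1=J_\alpha[J_\alpha Y,Z]$, $A_2=-J_\beta[Y,J_\beta Z]$ and $A_3=J_\gamma[J_\alpha Y,J_\beta Z]$. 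The structural observation that drives everything is that $A_0\in\g^1\subset\z$, while $A_1,A_2,A_3$ lie in $J_\alpha\g^1$, $J_\beta\g^1$, $J_\gamma\g^1$ respectively, so that Lemma \ref{lem:nilpotencystep} and its proof apply to each.

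Next I would apply $\nabla_X$ to each $A_i$ using two reductions. Centrality $\g^1\subset\z$ annihilates every term of \eqref{eqn:obata-alpha} in which a pure commutator sits in a slot of an outer bracket, which already eliminates most of the terms coming from each $A_i$; for instance the third and fourth summands of $\nabla_X A_2$ vanish because $J_\beta A_2=V_2\in\g^1$ with $V_2=[Y,J_\beta Z]$. For the survivors I would use the integrability identity \eqref{eqn:integrability2} in the two forms $[X,J_\delta V]=-J_\delta[J_\delta X,J_\delta V]$ and $J_\delta[J_\delta W,Z]=[J_\delta W,J_\delta Z]$ for $V,W\in\g^1$, together with the quaternionic relations $J_\alpha J_\beta=J_\gamma$, $J_\beta J_\alpha=-J_\gamma$, $J_\beta J_\gamma=J_\alpha$, $J_\gamma J_\alpha=J_\beta$. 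These let me cancel matched pairs (e.g.\ $[X,A_1]+J_\alpha[J_\alpha X,A_1]=0$, since $A_1=J_\alpha V_1$) and rewrite the rest. Carried out, $\nabla_X A_0$ yields line 1 of \eqref{eqn:obatacurvature} and $\nabla_X A_2$ yields line 3, while $\nabla_X A_1$ and $\nabla_X A_3$ yield lines 2 and 4 after the normalization described below.

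The decisive step is to bring every surviving term to the normal form in which the innermost commutator is wrapped by $J_\beta$, exactly as displayed in \eqref{eqn:obatacurvature}. For this I would introduce the polarized integrability relation available because every $J_y=y_1J_1+y_2J_2+y_3J_3$ is a complex structure: applying the Nijenhuis operator to $\tfrac1{\sqrt2}(J_\beta+J_\gamma)$ and subtracting $N_{J_\beta}=N_{J_\gamma}=0$ gives, for any two of the $J_\alpha$, the mixed identity
\[
J[KX,Y]+J[X,KY]+K[JX,Y]+K[X,JY]=[JX,KY]+[KX,JY].
\]
Specializing the second slot to $\g^1$ (so the central brackets drop) converts a $J_\gamma$- or $J_\alpha$-wrapped commutator into the required $J_\beta$-wrapped one; this is precisely what identifies the output of $\nabla_X A_1$ with line 2 and of $\nabla_X A_3$ with line 4. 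Finally, since $[X,Y]\in\g^1\subset\z$, the term $\nabla_{[X,Y]}Z$ collapses to $\tfrac12\bigl(J_\alpha[J_\alpha[X,Y],Z]+J_\gamma[J_\alpha[X,Y],J_\beta Z]\bigr)$, and the same integrability and mixed identities rewrite $-\nabla_{[X,Y]}Z$ as line 9.

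I expect the main obstacle to be organizational rather than conceptual: each $A_i$ must be expanded, the many central vanishings tracked, and every residual term reduced to the common $J_\beta$-normal form with the correct quaternionic signs across all cyclic roles of $(\alpha,\beta,\gamma)$. The mixed integrability relation is the one genuinely new ingredient that makes the normalization possible, and maintaining the sign bookkeeping through it is where the care concentrates; once lines 1--4 and line 9 are in place, lines 5--8 follow formally from the antisymmetry in $X,Y$.
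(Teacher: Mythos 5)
Your proposal is correct, and its skeleton coincides with the paper's: expand $R(X,Y)Z=\nabla_X\nabla_YZ-\nabla_Y\nabla_XZ-\nabla_{[X,Y]}Z$ via \eqref{eqn:obata-alpha}, use $\g^1\subset\z$ to kill terms, obtain lines 5--8 by antisymmetry in $X,Y$, and collapse $\nabla_{[X,Y]}Z$ to two terms before rewriting it as the last line. The genuine divergence is in how $\nabla_X\nabla_YZ$ is evaluated. The paper first uses $\nabla_X J_\alpha=J_\alpha\nabla_X$ (the defining property of the Obata connection) to pull every complex structure out of the outer $\nabla_X$, so that the Obata formula is only ever applied to central elements $W\in\g^1$, where it reduces to $2\nabla_XW=-J_\beta[X,J_\beta W]+J_\gamma[J_\alpha X,J_\beta W]$; lines 1--4 then drop out from the quaternionic relations alone, with no integrability identity at all. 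You instead expand $\nabla_X(J_\alpha W)$, $\nabla_X(J_\beta W)$, $\nabla_X(J_\gamma W)$ raw and restore the normal form by hand, via \eqref{eqn:integrability2} and your polarized Nijenhuis identity. I checked that this closes: the cancellation $[X,A_1]+J_\alpha[J_\alpha X,A_1]=0$, the vanishing of the unwanted halves of $\nabla_XA_2$ and $\nabla_XA_3$, and the specialization of your mixed identity with one slot in $\g^1$ are exactly enough to convert the survivors into lines 2 and 4, and the same identity rewrites $J_\gamma[J_\alpha[X,Y],J_\beta Z]$ into the last line (this is what the paper calls ``the integrability of $J_\gamma$ on the second term''). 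The trade-off: your mixed identity --- which, note, genuinely uses the integrability of $\tfrac1{\sqrt2}(J_\beta+J_\gamma)$, i.e.\ of the $2$-sphere of complex structures, a fact the paper asserts --- is doing work that the single equation $\nabla J_\alpha=0$ does for free, and each of your normalizations in effect re-proves an instance of $\nabla_X(J_\alpha W)=J_\alpha\nabla_XW$. The paper's route is therefore considerably shorter; in your write-up the mixed identity is needed in the second-derivative terms only because you declined to commute the $J_\alpha$'s past $\nabla$ at the outset, and it would remain necessary only for the single $\nabla_{[X,Y]}Z$ term.
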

\begin{proof}
The proof is an application of the formula \eqref{eqn:obata-alpha} and the integrability of $\hcx$. Using the formula~\eqref{eqn:obata-alpha} twice, the inclusion $\mathfrak{g}^1 \subset \mathfrak{z}$ and that $\nabla_X J_\alpha Y=J_\alpha \nabla_X Y$, we compute:
\begin{align*} 
4 \nabla_X \nabla_Y Z&=-J_\beta[X,J_\beta[Y,Z]]+J_\gamma [J_\alpha X,J_\beta[Y,Z]]  \\ 
&\quad -J_\gamma [X,J_\beta[J_\alpha Y,Z]]-J_\beta[J_\alpha X,J_\beta [J_\alpha Y,Z]]\\
&\quad -[X,J_\beta [Y,J_\beta Z]]-J_\alpha[J_\alpha X,J_\beta[Y,J_\beta Z]]           \\
&\quad +J_\alpha[X,J_\beta [J_\alpha Y,J_\beta Z]]-[J_\alpha X,J_\beta [J_\alpha Y, J_\beta Z]].
\end{align*}
Similarly, applying again \eqref{eqn:obata-alpha} and using the inclusion $\mathfrak{g}^1 \subset \mathfrak{z}$, we compute:
\begin{align*} 
2 \nabla_{[X,Y]} Z&= J_\alpha [J_\alpha[X,Y],Z]+J_\gamma[J_\alpha[X,Y],J_\beta Z]\\
&= [J_\alpha[X,Y],J_\alpha Z]+[J_\beta [X,Y],J_\beta Z]\\
&\quad  -[J_\alpha[X,Y],J_\alpha Z]-J_\gamma[J_\beta[X,Y],J_\alpha Z]\\
&=[J_\beta [X,Y],J_\beta Z]-J_\gamma[J_\beta[X,Y],J_\alpha Z], 
\end{align*}
where the second equality follows from the integrability of $J_\alpha$ on the first term and the integrability of $J_\gamma$ on the second term.
\end{proof}
As a consequence of Proposition \ref{prop:obatacurvature} we obtain the following result. This is a generalization of \cite[Proposition 5.1]{B1}, where it was shown that if $\z$ is $J_\al$ invariant for all $\al$ then $\hcx$ is Obata flat. 
\begin{corollary} \label{cor:flat}
Let $(\g,\hcx)$ be a $2$-step nilpotent hypercomplex Lie algebra. If any of the three complex structures $J_1,J_2,J_3$ is $2$-step nilpotent, then $(\g,\hcx)$ is Obata flat.
\end{corollary}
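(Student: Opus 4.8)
The plan is to read the vanishing of the curvature directly off the explicit formula \eqref{eqn:obatacurvature}, exploiting the freedom in the choice of cyclic permutation $(\alpha,\beta,\gamma)$. The key structural observation is that every one of the eighteen terms on the right-hand side of \eqref{eqn:obatacurvature} contains, in one of the two slots of its outermost Lie bracket, a factor of the form $J_\beta[\,\cdot\,,\,\cdot\,]$, that is, $J_\beta$ applied to a commutator.

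First I would choose the permutation to match the hypothesis. Since the three cyclic permutations of $(1,2,3)$ realize each of the indices $1,2,3$ as the middle entry $\beta$, I may assume without loss of generality that the complex structure occupying the position $\beta$ in \eqref{eqn:obatacurvature} is the one assumed to be $2$-step nilpotent. By Lemma \ref{lem:nilpotencystep}(1), this is precisely the inclusion $J_\beta \g^1 \subset \z$.

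Next I would observe that every inner commutator appearing in \eqref{eqn:obatacurvature}---whether $[Y,Z]$, $[J_\alpha Y,Z]$, $[Y,J_\beta Z]$, $[X,Y]$, or their $X\leftrightarrow Y$ counterparts---lies in $\g^1$, being a bracket of two elements of $\g$. Applying $J_\beta$ therefore sends it into $J_\beta \g^1 \subset \z$, so each factor $J_\beta[\,\cdot\,,\,\cdot\,]$ is central. Consequently the outermost bracket in which this central factor sits vanishes identically (using antisymmetry of the bracket for the final two terms, where the central factor occupies the first slot rather than the second). Summing over all eighteen terms gives $4R(X,Y)Z=0$ for all $X,Y,Z\in\g$, hence the Obata connection is flat.

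I do not expect a serious obstacle here: the argument is essentially immediate from Proposition \ref{prop:obatacurvature}. The only point requiring care is the bookkeeping that verifies every single term genuinely carries a $J_\beta$-on-a-commutator factor, together with the choice of permutation placing the $2$-step index into the $\beta$ slot. It is worth noting that this recovers and strengthens the earlier criterion phrased via $J_\alpha$-invariance of $\z$: if $\z$ is $J_\alpha$-invariant then $J_\alpha\g^1\subset J_\alpha\z\subset\z$, so $J_\alpha$ is $2$-step nilpotent, and here a single such $J_\alpha$ already forces flatness. This also anticipates the dichotomy that any non-flat example must have all three $J_\alpha$ being $3$-step nilpotent.
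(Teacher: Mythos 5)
Your proposal is correct and follows essentially the same route as the paper: choose the cyclic permutation so that the $2$-step nilpotent structure sits in the $\beta$ slot, invoke Lemma \ref{lem:nilpotencystep}(1) to get $J_\beta\g^1\subset\z$, and observe that every term in Formula \eqref{eqn:obatacurvature} is an outer bracket against an element of $J_\beta\g^1\subset\z$, hence vanishes. The paper's proof is just a compressed version of your bookkeeping, so there is nothing to add.
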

\begin{proof}
If $J_\beta$ is $2$-step nilpotent then $J_\beta  \g^1 \subset \z$ (Lemma \ref{lem:nilpotencystep}). The result then follows from Proposition~\ref{prop:obatacurvature}, since  each term contains an element of the form $[\cdot,J_\beta \g^1]$.
\end{proof}

\begin{remark}
As a consequence of Corollary \ref{cor:flat}, we have that if the Obata connection associated to $(\g,\hcx)$ is not flat, then $J_\alpha$ is $3$-step for all $\al$. Moreover, any complex structure $J_y$ in the hypercomplex 2-sphere is $3$-step. 
\end{remark}
\section{Construction of non-flat 2-step hypercomplex nilmanifolds} \label{sec:examples}
To the best of our knowledge, in all known examples of $2$-step nilpotent hypercomplex Lie algebras $(\mathfrak{g}, \hcx)$, each complex structure $J_\alpha$ is $2$-step \cite{B2,DF1,DF3,BD,LW,FG}. In particular, every known example of a $2$-step nilpotent Lie group equipped with a left-invariant hypercomplex structure is Obata flat (see Corollary~\ref{cor:flat}).\par 
According to \cite[Section 3]{DF3}, in dimension $8$ any nilpotent Lie algebra admitting a hypercomplex structure is $2$-step nilpotent, and, moreover, each complex structure $J_\alpha$ is $2$-step. \par
In this section, we construct examples of $2$-step nilpotent hypercomplex Lie algebras for which one, two, or all of the complex structures are $3$-step. Moreover, we exhibit an example in which all three complex structures are $3$-step and the associated Obata connection is not flat.
\subsection*{The construction}
Let $\n$ be a $2$-step nilpotent Lie algebra and $\g:=\n\oplus \R^{r}$for some $r\in\N$. \par
Let $\mu \in \bigwedge^2 \n^* \otimes \R^{r}$ be a $2$-form on $\n$ with values in $\R^{r}$ such that $\n^1 \subset \ker(\mu)$, where $\n^1=[\n,\n]$. We use $\mu$ to define a bracket $[\cdot,\cdot]'$ on $\g$ as follows:
\begin{align*}
 &[X,Y]'=([X,Y],\mu(X,Y)), \  X,Y \in \n, \\
 &[X,Y]'=0, \  X,Y \in \R^r, \\
 &[X,Y]'=0, \  X \in \n \ \ \text{and} \ \ Y \in \R^r.
\end{align*}
Then, $(\g, [\cdot,\cdot]')$ is a $2$-step nilpotent Lie algebra. We note that the condition \(\mathfrak{n}^1 \subset \ker(\mu)\) guarantees that the bracket \([\cdot,\cdot]'\) satisfies the Jacobi identity and that \(\mathfrak{g}\) is \(2\)-step nilpotent.\par
Let us now assume that $\n$ is a $2$-step nilpotent Lie algebra \emph{endowed with a hypercomplex structure} $\hcx$, and let $(\g=\n \oplus \R^{4k}, [\cdot,\cdot]')$ be the $2$-step nilpotent Lie algebra constructed as above. Then $\g$ carries a natural almost complex structure $J_\alpha'=J_\alpha\oplus I_\alpha$, where $\{I_\alpha\}$ is the standard hypercomplex structure on $\R^{4k}$. The following result is a characterization of $N_{J'_\alpha}=0$.
\begin{lemma}
The almost complex structure $J'_\alpha$ is integrable if and only if 
\begin{equation} \label{eqn:integrability}
I_\alpha \mu(X,Y)=\mu(J_\alpha X,Y)+\mu(X,J_\alpha Y)+I_\alpha \mu (J_\alpha X, J_\alpha Y), \ X,Y \in \n.
\end{equation}
\end{lemma}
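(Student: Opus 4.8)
The plan is to compute the Nijenhuis tensor $N_{J'_\alpha}$ directly from its definition and exploit the block structure $\g = \n \oplus \R^{4k}$, together with the facts that $J'_\alpha = J_\alpha \oplus I_\alpha$ preserves each summand and that the bracket $[\cdot,\cdot]'$ couples the two summands only through $\mu$. Since $N_{J'_\alpha}$ is bilinear and skew-symmetric, it suffices to evaluate it on the three types of pairs of arguments: both in $\R^{4k}$, one in each summand, and both in $\n$. First I would dispose of the two degenerate cases, which impose no condition.

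If $u,v \in \R^{4k}$, then $J'_\alpha u, J'_\alpha v \in \R^{4k}$ as well, so every bracket appearing in $N_{J'_\alpha}(u,v)$ is a bracket of two elements of $\R^{4k}$ and hence vanishes by the definition of $[\cdot,\cdot]'$; thus $N_{J'_\alpha}(u,v)=0$. Similarly, if $X \in \n$ and $v \in \R^{4k}$, then $J'_\alpha X = J_\alpha X \in \n$ while $J'_\alpha v = I_\alpha v \in \R^{4k}$, so all four brackets in $N_{J'_\alpha}(X,v)$ are of the mixed type $[\n,\R^{4k}]'=0$; hence $N_{J'_\alpha}(X,v)=0$. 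These vanishings reflect that $\{I_\alpha\}$ on $\R^{4k}$ is flat and central.

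The substantive case is $X,Y \in \n$. Here I would expand $N_{J'_\alpha}(X,Y)$ and separate the $\n$-component from the $\R^{4k}$-component, using $[X,Y]' = ([X,Y],\mu(X,Y))$ and $J'_\alpha(A,w) = (J_\alpha A, I_\alpha w)$. The $\n$-component collects exactly
\[
[X,Y] + J_\alpha([J_\alpha X, Y] + [X, J_\alpha Y]) - [J_\alpha X, J_\alpha Y] = N_{J_\alpha}(X,Y),
\]
which vanishes because $J_\alpha$ is integrable on $\n$. The $\R^{4k}$-component collects
\[
\mu(X,Y) + I_\alpha \mu(J_\alpha X, Y) + I_\alpha \mu(X, J_\alpha Y) - \mu(J_\alpha X, J_\alpha Y),
\]
so $N_{J'_\alpha}\equiv 0$ is equivalent to the vanishing of this $\R^{4k}$-valued expression.

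Finally, I would reconcile this vanishing condition with the stated form \eqref{eqn:integrability}. The directly obtained identity is not literally \eqref{eqn:integrability}, but applying the invertible operator $I_\alpha$ to it and using $I_\alpha^2 = -\mathrm{Id}$ turns $I_\alpha \mu(J_\alpha X, Y) + I_\alpha \mu(X, J_\alpha Y)$ into $-\mu(J_\alpha X, Y) - \mu(X, J_\alpha Y)$ and rearranges the remaining terms into precisely $I_\alpha \mu(X,Y) = \mu(J_\alpha X, Y) + \mu(X, J_\alpha Y) + I_\alpha \mu(J_\alpha X, J_\alpha Y)$; since $I_\alpha$ is invertible this is an equivalence. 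The computation is essentially routine bookkeeping, and the only genuine subtlety is this last step, namely recognizing that the raw condition and \eqref{eqn:integrability} coincide after applying $I_\alpha$, so I expect no real obstacle beyond careful tracking of the signs.
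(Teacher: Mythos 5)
Your proof is correct, and it is exactly the routine verification the paper has in mind: the paper states this lemma without any proof, treating the case analysis on $\n\oplus\R^{4k}$ and the resulting Nijenhuis computation as immediate. Your only nontrivial observation --- that the raw condition $\mu(X,Y)+I_\alpha\mu(J_\alpha X,Y)+I_\alpha\mu(X,J_\alpha Y)-\mu(J_\alpha X,J_\alpha Y)=0$ becomes \eqref{eqn:integrability} after applying the invertible operator $I_\alpha$ --- is handled correctly, so there is nothing to add.
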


Let us now assume that the $2$-step nilpotent hypercomplex Lie algebra $(\n,\hcx)$ satisfies the condition that each $J_\alpha$ is $2$-step. If there exists $X \in \n^1$ such that $\mu (J_\alpha X, \cdot) \neq 0$, then the corresponding complex structure $J'_\alpha$ on $\g=\n \oplus \R^{4k}$ is $3$-step (Lemma \ref{lem:nilpotencystep}). Since $X \in \n^1$ we may write
\[
X= \sum_j \, [Y_j,Z_j], \ \ \ Y_j, Z_j \in \n.
\]
We define 
\[
X':= \sum_j \, [Y_j,Z_j]'=\sum_j  ([Y_j,Z_j], \mu(Y_j,Z_j)) \in \g^1.
\]
We claim that $J_\alpha' X' \notin \z(\g)$. In fact, let $W \in \n$ be such that $\mu(J_\alpha X, W) \neq 0$, then
\[
[J'_\alpha X', W]'=(0,\mu(J_\alpha X,W)) \neq 0. 
\]

We will use this procedure to construct new examples of $2$-step nilpotent hypercomplex Lie algebras where one, two or all  three complex structures are $3$-step.
\subsection*{Examples}
All $8$-dimensional hypercomplex nilpotent Lie algebras were classified in \cite{DF3}. 
We consider the $8$-dimensional $2$-step nilpotent Lie algebra \(\mathfrak{n}_8\) defined by \footnote{We adopt the following notation: $e^{i,j}=e^i \wedge e^j$.}
\begin{equation} \label{eqn:n8}
    de^i = 0, \quad i=1,\dots,7, \qquad de^8 = e^{1,2} - e^{3,4} ,
\end{equation}
which corresponds to the Lie algebra \(\mathfrak{n}_1\) in \cite[Section~3]{DF3}.
It was shown in \cite[Section~3]{DF3} (see also \cite[Section~5]{DF1}) that \(\mathfrak{n}_8\) admits a hypercomplex structure $\hcx$, defined as follows:
\begin{align} \label{eqn:Jalpha}
&J_1 e_1=e_2, \ J_1 e_3=e_4, \ J_1 e_5= e_6, \ J_1 e_7=e_8, \nonumber\\
&J_2 e_1=e_3, \ J_2 e_2=-e_4, \ J_2 e_5= e_7, \ J_2 e_6=-e_8, \\
&J_3e_1=e_4, \ J_3 e_2=e_3, \ J_3 e_5= e_8, \ J_3 e_6=e_7. \nonumber
\end{align}
In particular, since $\n_8^1=\text{span}\{e_8\}$ and the center $\z(\n_8)=\text{span}\{e_5,e_6,e_7,e_8\}$, $J_\alpha$ is $2$-step for any $\alpha=1,2,3$.\par
Consider $\g=\n_8 \oplus \R^4$ and fix the standard basis $\{e_9,e_{10},e_{11},e_{12}\}$ of $\R^4$. We point out that $\{I_\al\}$ are defined as 
\begin{align*} 
&I_1 e_9=e_{10}, \ I_1 e_{11}=e_{12}, \nonumber\\
&I_2 e_9=e_{11}, \ I_2 e_{10}=-e_{12}, \\
&I_3 e_9=e_{12}, \ I_3 e_{10}=e_{11}. \nonumber
\end{align*}
\begin{example}\label{ex:2-2-3}
We consider the following Lie bracket $[\cdot,\cdot]'$ on $\g$ given by $[\cdot,\cdot]'=[\cdot,\cdot]+ \mu$, where $\mu \in \bigwedge^2 \n_8^* \otimes \R^4$ is defined as
\[
\mu =e^{1,5} \otimes e_9+e^{2,5} \otimes e_{10}+e^{3,5} \otimes e_{11}+e^{4,5} \otimes e_{12}.
\]
Since $\mu(e_8,\cdot)=0$, and Equation \ref{eqn:integrability} is satisfied for any $\alpha=1,2,3$, the $2$-step nilpotent Lie algebra $(\g=\n_8 \oplus \R^4,[\cdot,\cdot]')$ can be endowed with the hypercomplex structure $\{J'_\al=J_\al\oplus I_\al\}$. In particular, since $\mu(J_3 e_8,\cdot)=\mu(-e_5,\cdot) \neq 0$, $J'_3$ is $3$-step nilpotent, while $J'_1,J'_2$ are $2$-step nilpotent. \par
To sum up, the $2$-step nilpotent Lie algebra $\g$ defined by the structure equations
\[
\begin{split}
&de^i=0, \ \ \   i=1,\dots,7, \ \ \ de^8=e^{1,2}-e^{3,4}, \\
&de^9=-e^{1,5},  \ \ de^{10}=-e^{2,5}, \\
&de^{11}=-e^{3,5}, \ \  de^{12}=-e^{4,5},
\end{split}
\]
can be endowed with a hypercomplex structure $\{J'_\al\}$, where $J'_1,J'_2$ are $2$-step nilpotent and $J'_3$ is $3$-step nilpotent.\par
In order to construct a $2$-step nilpotent hypercomplex Lie algebra in which two of the complex structures are $3$-step nilpotent and one is $2$-step nilpotent, we consider the same underlying vector space $\mathfrak{g} = \n_8 \oplus \mathbb{R}^4$ as above, but we modify the Lie bracket by changing the $2$-form $\mu$. Let us define on $\g$ the Lie bracket $[\cdot,\cdot]'=[\cdot,   \cdot] + \mu$, where $\mu \in \bigwedge^2 \n_8^* \otimes \R^4$ is now defined as
\[
\mu =\big(e^{1,5}+e^{1,6}\big) \otimes e_9+\big(e^{2,5}+e^{2,6}\big) \otimes e_{10}+\big(e^{3,5}+e^{3,6}\big) \otimes e_{11}+\big(e^{4,5}+e^{4,6}\big) \otimes e_{12}.
\]
Since $\mu(e_8,\cdot)=0$, and Equation \ref{eqn:integrability} is satisfied for any $\alpha=1,2,3$, the $2$-step nilpotent Lie algebra $(\g=\n_8 \oplus \R^4,[\cdot,\cdot]')$ can be endowed with the hypercomplex structure $\{J'_\al=J_\al\oplus I_\al\}$. In particular, since $\mu(J_2 e_8,\cdot)=\mu(e_6,\cdot) \neq 0$ and $\mu(J_3 e_8,\cdot)=\mu(-e_5,\cdot) \neq 0$, $J'_2$ and $J'_3$ are $3$-step nilpotent, while $J'_1$ is $2$-step nilpotent. \par
In summary, the $2$-step nilpotent Lie algebra $\g$ defined by the structure equations
\[
\begin{split}
    &de^i=0, \  \ \  i=1,\dots,7, \ \  \ \ de^8=e^{1,2}-e^{3,4},\\
    &de^9=-e^{1,5}-e^{1,6}, \  \  de^{10}=-e^{2,5}-e^{2,6},\\
    & de^{11}=-e^{3,5}-e^{3,6}, \ \    de^{12}=-e^{4,5}-e^{4,6},
\end{split}
\]
can be endowed with a hypercomplex structure $\{J'_\al\}$, where $J'_1$ is $2$-step nilpotent and $J'_2$ and $J'_3$ are $3$-step nilpotent. \par
We point out that both of examples constructed above are still Obata flat by Corollary \ref{cor:flat}. 
\end{example}
\begin{example} \label{ex:nonflat}
In this last example we construct a $2$-step nilpotent hypercomplex Lie algebra where each complex structure is $3$-step. Let us consider again $\g=\n_8 \oplus \R^4$ with bracket $[\cdot,\cdot]'$ defined as $[\cdot,\cdot]'=[\cdot,\cdot]+\mu$ where $\mu \in \bigwedge^2 \n_8^* \otimes \R^4$ has the following definition:
\[
\mu=e^{5,6}\otimes e_9-e^{6,7}\otimes e_{11}+e^{5,7}\otimes e_{12}.
\]
Since $\mu(e_8,\cdot)=0$, and Equation \ref{eqn:integrability} is satisfied for any $\alpha=1,2,3$, the $2$-step nilpotent Lie algebra $(\g=\n_8 \oplus \R^4,[\cdot,\cdot]')$ can be endowed with the hypercomplex structure $\{J'_\al=J_\al \oplus I_\al\}$. Since $\mu(J_1 e_8,\cdot)=\mu(-e_7,\cdot) \neq 0$,\  $\mu(J_2 e_8,\cdot)=\mu(e_6,\cdot) \neq 0$ and $\mu(J_3 e_8,\cdot)=\mu(-e_5,\cdot) \neq 0$, $J'_1, J'_2$ and $J'_3$ are $3$-step nilpotent complex structures. \par
To sum up, the $2$-step nilpotent Lie algebra $\g$ defined by the structure equations
\[
\begin{split}
    &de^i=0, \ \ \ i=1,\dots,7, \ \ \ de^8=e^{1,2}-e^{3,4},\\
    &de^9=-e^{5,6}, \ \   de^{10}=0, \ \ \ de^{11}=e^{6,7}, \ \    de^{12}=-e^{5,7},
\end{split}
\]
admits a hypercomplex structure $\{J'_\al\}$, where $J'_1,J'_2$ and $J'_3$ are $3$-step nilpotent. We claim that this hypercomplex Lie algebra is not Obata flat. Using Formula \eqref{eqn:Obata}:
\[
\begin{split}
R'(e_8,e_1) e_1&= \nabla'_{e_{8}}\nabla'_{e_{1}} e_1=-\frac{1}{2}\nabla'_{e_{8}}e_7=-\frac{1}{4} e_9,
\end{split}
\]
where we used that $\nabla'_{e_{8}}e_1=0$ and $[e_1,e_8]=0$. \par
In Section \ref{section:holonomy}, we will prove that if we consider the corresponding $2$-step nilpotent hypercomplex Lie group, then the holonomy algebra $\mathfrak{hol}(\nabla')$ is abelian. 
\end{example}
We point out that the $2$-step nilpotent Lie groups corresponding to the three Lie algebras constructed in this section admit lattices, since all the bases considered in the examples are rational.
\subsection*{Semidirect product construction}
Next, we present a construction which is a slight generalization of \cite[Lemma~3.1]{BF}. \par
Let $(\g , \hcx )$ be an arbitrary hypercomplex Lie algebra and let $\{I_\al \}$ be the standard hypercomplex structure on $\R^{4k}$. Let $\rho:\g \to \g\l (k,\H )$ be a Lie algebra homomorphism, that is, 
\[  \rho ([X,Y])=[\rho (X), \rho(Y)], \quad \rho(X) I_\al={I_\al}_{\, }\rho (X), \;\; \text{ for all } X, \, Y \in \g, \; \al =1,2,3.   \]
Consider the Lie algebra $\h:= \g\ltimes_\rho \R^{4k}$, that is, 
\[{[(X,V),(Y,W)]}= ([X,Y],\rho(X)W-\rho(Y)V ), \quad X, Y\in \g, \; V,W\in \R ^{4k}.\]
The Jacobi identity on $\h$ follows from the fact that $\rho$ is a Lie algebra homomorphism. 
Note that $\{(0,V) \mid V \in \R^{4k}\}$ is an abelian ideal of $\h$. Let $\widetilde{J}_\al$, $\alpha =1,2,3$, be the endomorphisms of $\h$ defined by $\widetilde{J}_\al =J_\al \oplus I_\al$. 
\begin{proposition}\label{prop:semid-prod}
    Let $(\g , \hcx )$ and $(\h, \{\widetilde{J}_\al \})$ be as above, then $\{\widetilde{J}_\al \}$ is a hypercomplex structure on $\h$. Moreover, the Obata connection $\widetilde{\nabla}$ of $ \{\widetilde{J}_\al\} $ and the curvature tensor $\widetilde{R}$ of $\widetilde{\nabla}$ are given by:
\begin{eqnarray*}
\widetilde{\nabla}_{(X,V)} (Y,W)& =&(\nabla _X Y , \rho(X) W), \\ \widetilde{R}\left((X,V), (Y,W)\right) (Z,U) &=&
\left( R(X,Y)Z, 0\right), \quad X,Y,Z\in \g, \; V,W,U\in \R^{4k}, 
\end{eqnarray*}
where $\nabla$ is the Obata connection of $\hcx$ and $R$ is the curvature tensor of $\nabla $.  In particular, $\mathfrak{hol}(\widetilde{\nabla}) \cong \mathfrak{hol}(\nabla)$. 
\end{proposition}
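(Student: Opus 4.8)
The plan is to establish the three claims — that $\{\widetilde J_\al\}$ is hypercomplex, that $\widetilde\nabla$ has the stated form, and that the holonomy algebras agree — by reducing each to an algebraic identity on $\h=\g\oplus\R^{4k}$, using in an essential way the commutation $\rho(X)I_\al=I_\al\rho(X)$ and the homomorphism property $\rho([X,Y])=[\rho(X),\rho(Y)]$. For the hypercomplex structure, the quaternionic relations \eqref{quat} for $\{\widetilde J_\al\}$ follow at once from $\widetilde J_\al\widetilde J_\be=(J_\al J_\be)\oplus(I_\al I_\be)$ together with the corresponding relations for $\hcx$ and $\{I_\al\}$. For integrability I would expand the Nijenhuis tensor $N_{\widetilde J_\al}\big((X,V),(Y,W)\big)$ using the bracket of $\h$: its $\g$-component collapses to $N_{J_\al}(X,Y)=0$, and in its $\R^{4k}$-component, moving each $I_\al$ past the $\rho$'s via $\rho(X)I_\al=I_\al\rho(X)$ and applying $I_\al^2=-\I$ makes the eight $\rho$-terms cancel in pairs. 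Hence $N_{\widetilde J_\al}=0$ and $\{\widetilde J_\al\}$ is hypercomplex.

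For the connection, rather than substitute into \eqref{eqn:Obata} I would take $\widetilde\nabla_{(X,V)}(Y,W):=(\nabla_X Y,\rho(X)W)$ as an ansatz for a left-invariant connection and identify it with the Obata connection by its characterizing properties. Torsion-freeness is immediate: the $\g$-component is that of $\nabla$, and the $\R^{4k}$-contribution $\rho(X)W-\rho(Y)V$ coming from $\widetilde\nabla_{(X,V)}(Y,W)-\widetilde\nabla_{(Y,W)}(X,V)$ exactly matches the bracket. The condition $\widetilde\nabla\widetilde J_\al=0$ follows from $\nabla J_\al=0$ on the first factor and from $\rho(X)I_\al=I_\al\rho(X)$ on the second. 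By uniqueness of the Obata connection this ansatz is $\widetilde\nabla$. The curvature is then a direct computation whose $\R^{4k}$-component is $\rho(X)\rho(Y)U-\rho(Y)\rho(X)U-\rho([X,Y])U$; this is where the homomorphism property is used, forcing the second component to vanish and leaving $\widetilde R\big((X,V),(Y,W)\big)(Z,U)=(R(X,Y)Z,0)$.

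For the holonomy, I would use that for an invariant — hence analytic — connection the holonomy algebra $\mathfrak{hol}$ is the span of the curvature endomorphisms and their iterated covariant derivatives. Let $\iota\colon\mathfrak{gl}(\g)\to\mathfrak{gl}(\h)$, $\iota(\Phi)=\Phi\oplus 0$; since $\iota(\Phi)\iota(\Psi)=\iota(\Phi\Psi)$, it is an injective homomorphism of associative, hence Lie, algebras. The curvature formula reads $\widetilde R\big((X,V),(Y,W)\big)=\iota\big(R(X,Y)\big)$, and a short induction using $\widetilde\nabla_{(X_0,V_0)}\iota(\Phi)=\iota(\nabla_{X_0}\Phi)$ — the zero block absorbing every $\rho$-contribution — shows that each iterated covariant derivative of $\widetilde R$ is the $\iota$-image of the corresponding derivative of $R$. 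As $(X_0,V_0)$ ranges over $\h$ the operator $\nabla_{X_0}$ ranges over all $X_0\in\g$, so the generating sets correspond under $\iota$; therefore $\mathfrak{hol}(\widetilde\nabla)=\iota\big(\mathfrak{hol}(\nabla)\big)\cong\mathfrak{hol}(\nabla)$. Equivalently, $\g\oplus 0$ and $0\oplus\R^{4k}$ are $\widetilde\nabla$-parallel, with trivial holonomy on the second summand and $\mathfrak{hol}(\nabla)$ on the first.

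The routine computations aside, the two places demanding care are the pairwise cancellation in the eight-term $\R^{4k}$-component of $N_{\widetilde J_\al}$ and the inductive identification of the covariant derivatives of $\widetilde R$; the only genuinely non-mechanical point is invoking the correct description of $\mathfrak{hol}$ for an invariant connection, so that the block-diagonal form of curvature and all its covariant derivatives really does determine the holonomy algebra. Once the commutation relation and the homomorphism property are inserted at the right places, everything else is linear algebra.
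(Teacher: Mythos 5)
Your proof is correct and complete, and on the first two claims it coincides in substance with the paper's approach: the paper likewise deduces integrability of each $\widetilde J_\al$ from the integrability of $\hcx$ together with the commutation $\rho(X)I_\al=I_\al\rho(X)$, and for the formulas for $\widetilde\nabla$ and $\widetilde R$ it simply defers to \cite[Lemma~3.1]{BF} --- your ansatz-plus-uniqueness identification of $\widetilde\nabla$ (torsion-free, preserves each $\widetilde J_\al$, hence equals the Obata connection) and the curvature computation via $[\rho(X),\rho(Y)]=\rho([X,Y])$ are exactly the content being outsourced there. The one place where you genuinely depart from the paper is the holonomy claim, which the paper leaves implicit. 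You invoke the infinitesimal-holonomy description of $\mathfrak{hol}$ (span of the curvature endomorphisms and their iterated covariant derivatives) and run an induction showing that every $\widetilde\nabla^k\widetilde R$ is the $\iota$-image of the corresponding $\nabla^k R$; this is sound, but note that analyticity alone only gives local $=$ infinitesimal holonomy, and one also needs the homogeneity of an invariant connection (constancy of the dimension of the local holonomy) to upgrade this to the restricted holonomy algebra --- worth one sentence in your write-up. The paper's own toolkit shortcuts all of this: by Theorem~\ref{thm:AS2} (Alekseevski \cite{Al}), $\mathfrak{hol}(\widetilde\nabla)$ is the smallest subalgebra of $\mathfrak{gl}(\h)$ containing the endomorphisms $\widetilde R\bigl((X,V),(Y,W)\bigr)=\iota\bigl(R(X,Y)\bigr)$ and closed under commutators with the operators $\widetilde\nabla_{(X,V)}=\nabla_X\oplus\rho(X)$; since $[\iota(\Phi),\nabla_X\oplus\rho(X)]=\iota([\Phi,\nabla_X])$ (the $\rho(X)$ block is killed by the zero block of $\iota(\Phi)$), that smallest subalgebra is precisely $\iota(\mathfrak{hol}(\nabla))$, with no induction on covariant derivatives and no appeal to analyticity. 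So your route buys independence from Theorem~\ref{thm:AS2} at the cost of quoting the analytic/infinitesimal holonomy theorem, while the Alekseevski argument stays inside the paper's framework and yields $\mathfrak{hol}(\widetilde\nabla)\cong\mathfrak{hol}(\nabla)$ in two lines.
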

\begin{proof}
 Since $\hcx$ is integrable on $\g$ and $\rho(X)$ commutes with $I_\al$ for all $X\in \g$, $\al =1,2,3$, it turns out that $\{\widetilde{J}_\al \}$ is integrable on $\h$. \par
The rest of the proof follows the lines of \cite[Lemma~3.1]{BF}.
\end{proof}

\begin{lemma}\label{lem:center-comm} \item[ ]
\begin{enumerate} 
    \item[$(1)$] The center and commutator ideal of the Lie algebra $\h$ are given by:
    \begin{eqnarray*}
 \z(\h)&=&\left\{  (X,V) \mid X\in \z(\g)\cap \ker \rho , \; V\in \bigcap_{\substack{Y\in \g}} \ker \rho (Y) \right\} , \\   
 \h ^1&=& \left\{  (X,V) \mid X\in \g ^1, \; V\in \rho (\g)\R^{4k}\right\} ,
    \end{eqnarray*}
where $ \rho (\g)\R^{4k}=\mathrm{span}\{ \rho(X) V \mid X \in \g \ \text{and} \  V \in \R^{4k}\}$.
\item[$(2)$] If $\g$ is $l$-step nilpotent and the set
\begin{equation}\label{eqn:minimum} \{j\in \N \mid \rho(X_1)\cdots \rho(X_j)=0, \ \text{for all} \ \ X_1,\dots,X_j \in \g\}\end{equation}
is non-empty, then $\h$ is $r$-step nilpotent, with $r=\max\{l, m_\rho\}$, where $m_\rho$ is the minimum of the set defined in \eqref{eqn:minimum}. In particular, if $\g$ is $2$-step nilpotent and $\rho(X_1)\cdot \rho(X_2)=0$ for any $X_1,X_2 \in \g$, then $\h$ is $2$-step nilpotent.
\item[$(3)$]  If $\h$ is 2-step nilpotent, then $\widetilde{J}_\al$ is 2-step nilpotent if and only if $J_\al \g^1\subset \z(\g)\cap \ker \rho$.  In particular, if $J_\al$ is 3-step nilpotent, then $\widetilde{J}_\al$ is 3-step nilpotent.
\end{enumerate}
\end{lemma}
\begin{proof}
The only nontrivial points are the second equality in~(1) and statement~(2).

The first claim follows from the computation
\[
\mathfrak{h}^1=[\mathfrak{h},\mathfrak{h}]
=[(\mathfrak{g}\oplus \mathbb{R}^{4k},\,\mathfrak{g}\oplus \mathbb{R}^{4k})]
=\mathfrak{g}^1 \oplus \rho(\mathfrak{g})\,\mathbb{R}^{4k}.
\]
To prove the second statement, we recall that $\mathfrak{h}$ is $r$-step
nilpotent if and only if $r$ is the smallest integer such that, for any
$X_i\in\mathfrak{g}$ and $V_i\in\mathbb{R}^{4k}$,
\[
\prod_{i=1}^r \operatorname{ad}_{(X_i,V_i)}=0.
\]
We first observe that, for any $s\ge 1$, 
\[
\left(\prod_{i=1}^s \operatorname{ad}_{(X_i,0)}\right)(Y,W)
=
\left( \left(\prod_{i=1}^s \operatorname{ad}_{X_i}\right) Y,\;
       \left(\prod_{i=1}^s \rho(X_i) \right)W \right).
\]
Since $\mathfrak{g}$ is $l$-step nilpotent and $m_\rho$ is minimal with
the property above, this immediately implies that
\[
r \ge \mathrm{max}\{l, m_\rho\}.
\]
On the other hand, let $r=\max\{l, m_\rho\}$. 
Let $Y \in \g$ and $W \in \R^{4k}$. Then, using the fact that
\[
\left( \prod_{i=1}^r \operatorname{ad}_{(X_i,V_i)}\right)(Y,W) 
\in \g^r \oplus \rho(\mathfrak{g})^{\,r}(\mathbb{R}^{4k}),
\]
and the definition of $l$ and $m_\rho$, 
we conclude that
\[
\prod_{i=1}^r \operatorname{ad}_{(X_i,V_i)}=0.
\]
Therefore, $\mathfrak{h}$ is $r$-step nilpotent with
$r=\max\{l, m_\rho\}$. 
\end{proof}
\noindent New examples of Obata non flat 2-step nilpotent hypercomplex Lie algebras can be obtained by applying the semidirect product construction, as we show next.  
   \begin{example}\label{ex:semidirect}
       Let $\g$ be the Lie algebra from Example \ref{ex:nonflat} with the hypercomplex structure considered there. We define  $\rho:\g\to \g\l (2,\H)\subset \mathfrak{gl}(8,\R)$ by:
       \[ \rho(e_1)=\begin{bmatrix} 
       0_4 &0_4\\
        I_4 & 0_4\end{bmatrix}, \qquad\quad \rho(e_i)=0, \quad i\geq 2 ,\]
        where $I_4$ and $0_4$ are the real $4\times 4$ identity and zero matrices, respectively. 
        It follows from Example \ref{ex:nonflat}, Proposition \ref{prop:semid-prod} and Lemma \ref{lem:center-comm} that the hypercomplex structure $\{\widetilde{J}_\al \}$ on $\h$ is not Obata flat.  Theorem \ref{thm:holonomy} in Section \ref{section:holonomy} below implies that the holonomy algebra $\mathfrak{hol}(\widetilde{\nabla})$ of the corresponding $2$-step nilpotent hypercomplex Lie group is abelian. 
   \end{example}
   \begin{example}\label{ex:k-step}
   Let $\g$ be as in Example \ref{ex:nonflat} and set $\rho:\g\to \g\l (k,\H)\subset \mathfrak{gl}(4k,\R)$, $k\geq 3$, defined by: 
        \[ \rho(e_1)=\left[
\begin{array}{ccccc}
   0_4  &  0_4 & \cdots & \cdots & 0_4 \\
   I_4 & 0_4 & 0_4 & \cdots & 0_4 \\
   0_4 & I_4 & 0_4 & \cdots & 0_4 \\
   \vdots & \vdots & \ddots & \ddots & 0_4 \\
   0_4 & 0_4 & 0_4 & I_4 & 0_4
\end{array}\right] \in M_{4k}(\R), \qquad \rho(e_i)=0, \quad i\geq 2 .\]
In this case, $\h$ is $k$-step nilpotent and $\{\widetilde{J}_\al\}$ is not Obata flat. By Proposition \ref{prop:semid-prod}, the holonomy algebra coincides with the holonomy algebra of the Example \ref{ex:nonflat}. In particular, it is abelian (see Section \ref{section:holonomy}).
\end{example}

We note that the simply connected Lie groups corresponding to the Lie algebras from examples \ref{ex:semidirect} and \ref{ex:k-step} admit lattices, since all the bases considered in the examples are rational.
\section{$\mathbb{H}$-solvability of $2$-step nilpotent Lie algebras}
Let $\g$ be a Lie algebra endowed with a hypercomplex structure $\{J_1,J_2,J_3\}$. 
Given a vector subspace $W$ of $\g$, we denote by $\H W$ the $\hcx$-invariant subspace $W +J_1 W +J_2 W +J_3 W$. 

Inspired by \cite[Definition 1.5]{Gor1}, we introduce the following definition.
\begin{definition} 
Let $\g$ be a Lie algebra endowed with a hypercomplex structure $\{J_1,J_2,J_3\}$.
Define inductively $\hcx$-invariant subspaces:
\[ \g_0^{\H}=\g, \qquad \g_k^{\H}:=\H \g_{k-1}^{\H}, \quad k\geq 1.\] 
A hypercomplex  Lie algebra $(\g , \hcx )$ is called \emph{$\H$-solvable} if the following sequence eventually becomes zero for some $k\in\N$:
\begin{equation}\label{eqn:Hsol}
\g_1^{\H}\supset\g_2^{\H}\supset\cdots\supset\g_{k-1}^{\H}\supset\g_k^{\H}=0.
\end{equation}
The smallest such $k$ is called the $\H$-solvability step of $(\g,\hcx)$.
\end{definition}
\begin{lemma}
Let $(\g,\hcx)$ be an arbitrary hypercomplex Lie algebra. 
\begin{enumerate}
    \item[$(1)$] For any $k \in \N$, $\g_k^\H$ is a subalgebra of $\g$ and it is an ideal in $\g_{k-1}^\H$.
    \item[$(2)$] If $(\g, \hcx)$ is $\H$-solvable then $\g$ is solvable. 
\end{enumerate}
\end{lemma}
\begin{proof}
We prove the first item. It suffices to show that $\mathfrak{g}_k^{\mathbb{H}}$ is an ideal of
$\mathfrak{g}_{k-1}^{\mathbb{H}}$.
Since $\mathfrak{g}^1 \subset \mathfrak{g}_1^{\mathbb{H}}$, we have that 
$\mathfrak{g}_1^{\mathbb{H}}$ is clearly an ideal of
$\mathfrak{g}_0^{\mathbb{H}}=\mathfrak{g}$.
Assume by induction that $\mathfrak{g}_{k-1}^{\mathbb{H}}$ is an ideal of
$\mathfrak{g}_{k-2}^{\mathbb{H}}$. In particular,
$\mathfrak{g}_{k-1}^{\mathbb{H}}$ is a subalgebra of $\mathfrak{g}$.
Then
\begin{align*}
    [\g_{k}^\H, \g_{k-1}^\H]=[\H[\g_{k-1}^\H,\g_{k-1}^\H], \g_{k-1}^\H]\subset [\H\g_{k-1}^\H, \g_{k-1}^\H]=[\g_{k-1}^\H, \g_{k-1}^\H] \subset \g_k^\H.
\end{align*}

To show the second, we work by induction by showing that $\g_k^\H \supset D^k $, where $\{D^k\}_{k \ge 1}$ is the derived series of $\g$.\par 
For $k=1$, $\g_1^\H\supset \g^1=D^1$. Assume that $\g_{k-1}^\H\supset D^{k-1}$. Then 
\[
\g_k^\H= \H [\g_{k-1}^\H, \g_{k-1}^\H]\supset \H [D^{k-1}, D^{k-1}]=\H D^k \supset D^k.
\]
Therefore, the $\H$-solvability of $(\g, \hcx)$ implies the solvability of $\g$.
\end{proof}
Let $(\mathfrak{g}, J)$ be a nilpotent Lie algebra endowed with a complex structure $J$. It was shown by Salamon~\cite[Corollary 1.4]{Sal} that the ideal $\mathfrak{g}^1 + J\mathfrak{g}^1$ is a proper ideal of $\mathfrak{g}$. This naturally raises the question of whether an analogous property holds for nilpotent Lie algebras endowed with a hypercomplex structure. \par
The condition that $\g_1^\H=\mathfrak{g}^1 + J_1 \mathfrak{g}^1 + J_2 \mathfrak{g}^1 + J_3 \mathfrak{g}^1$ is a proper ideal of $\mathfrak{g}$ is necessary to the $\mathbb{H}$-solvability of $(\mathfrak{g}, \hcx)$. This observation motivates the following conjecture:
\begin{conjecture}[\cite{Gor1}] \label{conj:Hsolvable}
Let $(\mathfrak{g}, \hcx)$ be a nilpotent hypercomplex Lie algebra. Then it is $\mathbb{H}$-solvable.
\end{conjecture}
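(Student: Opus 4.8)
The plan is to reduce the conjecture to a single properness statement and then attack that statement dually. The reduction rests on the identity $\g_k^\H=(\g_{k-1}^\H)_1^\H$: by the preceding Lemma each $\g_{k-1}^\H$ is an $\H$-invariant subalgebra, hence itself a nilpotent hypercomplex Lie algebra for the restricted structure, and $\g_k^\H=\H[\g_{k-1}^\H,\g_{k-1}^\H]=\H(\g_{k-1}^\H)^1$ is exactly the first term of its own $\H$-series. Consequently the chain \eqref{eqn:Hsol} strictly decreases at every nonzero stage if and only if
\[
\H\h^1=\h^1+J_1\h^1+J_2\h^1+J_3\h^1\subsetneq\h
\]
for every nonzero nilpotent hypercomplex Lie algebra $\h$; since $\dim\g<\infty$, this properness forces the series to reach $0$. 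Thus it suffices to prove this quaternionic analogue of Salamon's $\g^1+J\g^1\subsetneq\g$ for every such $\h$.

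I would then dualize the properness condition. Writing $Z^1=(\h^1)^\circ\subset\h^*$ for the space of closed $1$-forms and letting $\H$ act on $\h^*$ through the transposes $J_\al^*$, the annihilator computation $\,(J_\al\h^1)^\circ=J_\al^*Z^1\,$ gives
\[
(\H\h^1)^\circ=Z^1\cap J_1^*Z^1\cap J_2^*Z^1\cap J_3^*Z^1,
\]
the largest $\H$-submodule of $\h^*$ contained in $Z^1$. Hence $\H\h^1\subsetneq\h$ is equivalent to the existence of a nonzero quaternionic line $\H\xi\subset Z^1$, that is, a nonzero $\xi$ with $\xi,\xi\circ J_1,\xi\circ J_2,\xi\circ J_3$ all closed. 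Salamon's result~\cite{Sal} applied to each $J_\al$ separately already yields three, generally distinct, complex lines $Z^1\cap J_\al^*Z^1\neq0$; the entire problem is to upgrade these to one common quaternionic line.

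To produce such a $\xi$ I would seek a simultaneous, quaternionic refinement of Salamon's stairway: an ascending chain of $\H$-invariant ideals $0=\a_0\subset\a_1\subset\cdots\subset\a_m=\h$ with quaternionic quotients on whose annihilators the Chevalley--Eilenberg differential lowers the filtration, so that the bottom stratum $\a_1$ consists of closed, central, quaternionic directions and supplies $\H\xi$. Concretely I would combine the ascending series $\a_k^\al$ adapted to the individual $J_\al$ with the integrability identity \eqref{eqn:integrability2} for all three structures, aiming to show that the intersection displayed above is forced to be nonzero. A natural first case to settle is that in which every $J_\al$ is a nilpotent complex structure, where the adapted series exhaust $\h$ and can be intersected directly.

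The step I expect to be hardest is precisely this upgrade from three complex lines to one quaternionic line. A dimension count is insufficient: the intersection $Z^1\cap J_1^*Z^1\cap J_2^*Z^1\cap J_3^*Z^1$ has dimension at least $4(n-d)$, where $\dim_\R\h=4n$ and $d=\dim_\R\h^1$, and so is only guaranteed nonzero when $d<n$, whereas $\h^1$ may be much larger. Positivity of the intersection must therefore be extracted from the way the three Nijenhuis conditions interlock, and the real difficulty is to control how these constraints propagate through the successive terms of the lower central series. In the $2$-step case the hypothesis $\h^1\subset\z(\h)$ collapses this interaction, which is what makes Theorem \ref{thm:Hsolvable} tractable; the program above is the attempt to carry that control through every nilpotency step.
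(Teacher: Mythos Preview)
The statement you are addressing is a \emph{conjecture} that the paper does not prove in general; it is recorded precisely as an open problem, and the paper establishes only the $2$-step case (Theorem~\ref{thm:Hsolvable}). There is therefore no ``paper's own proof'' of Conjecture~\ref{conj:Hsolvable} to compare your proposal against.

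Your reduction is correct: the identity $\g_k^{\H}=(\g_{k-1}^{\H})_1^{\H}$ holds because each $\g_{k-1}^{\H}$ is an $\H$-invariant subalgebra (hence a nilpotent hypercomplex Lie algebra in its own right), and your dualization $(\H\h^1)^\circ=Z^1\cap J_1^*Z^1\cap J_2^*Z^1\cap J_3^*Z^1$ is also correct. But after that point you do not prove anything---you describe a program (``I would seek a simultaneous, quaternionic refinement of Salamon's stairway'') and then explicitly concede that the decisive step, upgrading Salamon's three separate complex lines $Z^1\cap J_\al^*Z^1$ to a single quaternionic line, is the hard part and that a dimension count is insufficient. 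This is exactly the obstruction that keeps the conjecture open; your proposal is a clear reformulation of the problem, not a proof.

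For comparison, the paper's argument in the $2$-step case does not go through your dual picture at all. It computes directly: using \eqref{eqn:integrability2} and $\g^1\subset\z$, one shows that $[\g_1^{\H},\g_1^{\H}]$ reduces to the three cross-terms $[J_\al\g^1,J_\beta\g^1]$, that the quaternionic identities collapse $J_1[J_2\g^1,J_3\g^1]=J_2[J_1\g^1,J_3\g^1]=J_3[J_1\g^1,J_2\g^1]$, and hence that $\g_2^{\H}\subset\g^1+J_1\g^1$, from which $\g_3^{\H}=0$ follows because $J_1\g^1$ is abelian. This hands-on route exploits $2$-step nilpotency at every line and does not visibly suggest how to handle higher step, which is consistent with your assessment that the general case requires a genuinely new idea.
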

When the Lie algebra $\g = \operatorname{Lie}(G)$ is $\mathbb{H}$-solvable, it turns out that, for a generic complex structure $J_y$ in the hypercomplex $2$-sphere, the complex nilmanifold $(\Gamma \backslash G, J_y)$ does not admit any complex curves \cite{Gor1}. \par
The conjecture holds for Lie algebras of hypercomplex nilmanifolds with flat Obata connection \cite{Gor2}, as well as for Lie algebras of abelian hypercomplex nilmanifolds \cite{AV}. The filtration \eqref{eqn:Hsol} corresponds to an iterated hypercomplex toric bundle \cite{Gor1}, and in the latter case it was shown in \cite[Proposition~4.5]{AV} that such iterated torus bundle terminates at a point. Moreover, for a general complex structure $J_y$, the associated complex nilmanifold $(\Gamma \backslash G, J_y)$ not only admits no complex curves (as a consequence of \cite{Gor1}), but in fact all of its subvarieties are trianalytic.

Here we prove the conjecture for $2$-step nilpotent Lie algebras.
\begin{theorem} \label{thm:Hsolvable}
Let $(\g,\hcx)$ be a $2$-step nilpotent hypercomplex Lie algebra. Then $(\g,\hcx)$ is $\H$-solvable. In particular, $\g_1^\H$ is a proper ideal of $\g$ and the $\H$-solvability step is at most $3$.
\end{theorem}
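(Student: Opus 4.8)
The plan is to prove directly that $\g_3^\H=0$, from which both the $\H$-solvability of $(\g,\hcx)$ and the bound $\leq 3$ on the step follow at once, and then to deduce the properness of $\g_1^\H$ as a formal consequence. The engine of the argument is the fact, recorded in Lemma~\ref{lem:nilpotencystep}(3), that each $J_\al\g^1$ is an abelian ideal of $\g$; I will combine this with the $2$-step condition $\g^1\subset\z$ to control every bracket that can appear in the $\H$-series.

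First I would record two elementary consequences. Since $\g$ is $2$-step, every bracket lies in $\g^1\subset\z$; and since $J_\al\g^1$ is an ideal, for $Y\in\g^1$ and $Z\in\g$ the bracket $[J_\al Y,Z]$ also lies in $J_\al\g^1$. Hence, for any $\al\neq\be$,
\[
[J_\al\g^1,\,J_\be\g^1]\ \subset\ \g^1\cap J_\al\g^1\cap J_\be\g^1 .
\]
Writing $U_{\al\be}:=[J_\al\g^1,J_\be\g^1]$, the central contributions $[\g^1,\cdot]=0$ and the abelian ones $[J_\al\g^1,J_\al\g^1]=0$ drop out of $[\g_1^\H,\g_1^\H]$, so that $[\g_1^\H,\g_1^\H]=U_{12}+U_{23}+U_{31}$ and therefore $\g_2^\H=\H\,(U_{12}+U_{23}+U_{31})$.

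The heart of the proof, and the step I expect to be the main obstacle, is to show that $\g_2^\H$ is abelian. I would do this by applying the quaternionic relations \eqref{quat} to each translate $J_\delta U_{\al\be}$. Using $U_{\al\be}\subset J_\al\g^1\cap J_\be\g^1$, two of the three translates, namely $J_\al U_{\al\be}$ and $J_\be U_{\al\be}$, land back in $\g^1\subset\z$ and are thus central; only the ``third'' translate (e.g.\ $J_3U_{12}$) is not manifestly central, and this is the delicate point. The key computation is that this third translate nevertheless lands in an intersection of two of the ideals:
\[
J_3U_{12}\subset J_1\g^1\cap J_2\g^1,\qquad J_1U_{23}\subset J_2\g^1\cap J_3\g^1,\qquad J_2U_{31}\subset J_1\g^1\cap J_3\g^1 .
\]
Since any two of these three subspaces share a common factor $J_\delta\g^1$, and each $J_\delta\g^1$ is abelian, every mutual bracket among them vanishes, while the central part brackets trivially with everything. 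Hence $[\g_2^\H,\g_2^\H]=0$, so $\g_2^\H$ is abelian and $\g_3^\H=\H[\g_2^\H,\g_2^\H]=0$.

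Finally I would tie up the remaining assertions. As a sum of the ideals $\g^1,J_1\g^1,J_2\g^1,J_3\g^1$, the subspace $\g_1^\H$ is an ideal of $\g$. For properness I would argue by contradiction: if $\g_1^\H=\g$, then $\g_2^\H=\H[\g,\g]=\H\g^1=\g_1^\H=\g$, and inductively every $\g_k^\H=\g$, contradicting $\g_3^\H=0$ unless $\g=0$. Thus for $\g\neq 0$ the inclusion $\g_1^\H\subsetneq\g$ is strict, completing the proof that $(\g,\hcx)$ is $\H$-solvable with step at most $3$.
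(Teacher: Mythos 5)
Your proof is correct, and it shares the paper's overall skeleton: both reduce $[\g_1^\H,\g_1^\H]$ to the three mixed brackets $U_{\al\be}=[J_\al\g^1,J_\be\g^1]$ (the terms involving $\g^1$ vanish by centrality, the diagonal ones by abelianness of each $J_\al\g^1$), both then show $\g_3^\H=0$, and both deduce properness of $\g_1^\H$ from the termination of the series exactly as you do. Where you genuinely diverge is in the treatment of the translates $J_\delta U_{\al\be}$. The paper applies the integrability identity \eqref{eqn:integrability2} directly (together with the quaternionic relations) to prove that the three non-central translates actually \emph{coincide}, namely $J_1[J_2\g^1,J_3\g^1]=J_2[J_1\g^1,J_3\g^1]=J_3[J_1\g^1,J_2\g^1]$, which yields the sharper inclusion $\g_2^\H\subset\g^1+J_1\g^1$; abelianness of the single ideal $J_1\g^1$ then finishes the argument in one stroke. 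You instead route everything through Lemma \ref{lem:nilpotencystep}(3): the ideal property gives $U_{\al\be}\subset\g^1\cap J_\al\g^1\cap J_\be\g^1$, the quaternionic relations (applied only at the level of subspaces, e.g.\ $J_3J_1\g^1=J_2\g^1$) place each third translate in a pairwise intersection $J_\delta\g^1\cap J_\epsilon\g^1$, and since any two of the three intersections share a common abelian factor $J_\delta\g^1$, every bracket in $[\g_2^\H,\g_2^\H]$ vanishes. Your version avoids any further appeal to the pointwise integrability identity (it is absorbed into the lemma's ideal statement) and treats the three complex structures symmetrically, at the cost of a less explicit description of $\g_2^\H$ (a sum of three intersections modulo the center, rather than $\g^1+J_1\g^1$); for the purpose of proving $\g_3^\H=0$, and hence the theorem, both descriptions are equally effective.
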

\begin{proof}
Let us consider 
\[
\g_2^\H=\H[\g_1^\H,\g_1^\H].
\]
Using that $\g^1\subset \z$, and Lemma \ref{lem:nilpotencystep}
 (3), we obtain that $[J_\alpha \g^1,J_\alpha \g^1] = 0$, and
\begin{align*}
[\g_1^\H,\g_1^\H]=&[\g^1+J_1 \g^1+J_2 \g^1+J_3 \g^1,\g^1+J_1 \g^1+J_2 \g^1+J_3 \g^1]\\
=&[J_1 \g^1,J_2 \g^1]+[J_1 \g^1,J_3 \g^1]+[J_2 \g^1,J_3 \g^1]. 
\end{align*}
Therefore,
\begin{align*}
\H [\g_1^\H,\g_1^\H]=& \   [J_1 \g^1,J_2 \g^1]+[J_1 \g^1,J_3 \g^1]+[J_2 \g^1,J_3 \g^1]\\
&+J_1([J_1 \g^1,J_2 \g^1]+[J_1 \g^1,J_3 \g^1]+[J_2 \g^1,J_3 \g^1])\\
&+J_2([J_1 \g^1,J_2 \g^1]+[J_1 \g^1,J_3 \g^1]+[J_2 \g^1,J_3 \g^1])\\
&+J_3([J_1 \g^1,J_2 \g^1]+[J_1 \g^1,J_3 \g^1]+[J_2 \g^1,J_3 \g^1]).
\end{align*}
Since by Equation \eqref{eqn:integrability2} we have that $J_\alpha[J_\alpha \g^1, J_\beta \g^1]\subset \g^1$, we may write:
\begin{align*}
\g^\H_2=\H [\g_1^\H,\g_1^\H]& \subset \g^1 +J_1 [J_2 \g^1,J_3 \g^1]+J_2[J_1 \g^1,J_3 \g^1]+J_3[J_1 \g^1,J_2 \g^1].
\end{align*}
We claim that $J_1 [J_2 \g^1,J_3 \g^1]=J_2[J_1 \g^1,J_3 \g^1]=J_3[J_1 \g^1,J_2 \g^1]$.
Using the quaternionic identities and Equation \eqref{eqn:integrability2} with $J_1$, we have:
\[
J_2[J_1 \g^1,J_3 \g^1]=J_3J_1[J_1 \g^1,J_3 \g^1]=J_3[J_1 \g^1,J_2 \g^1],
\]
and, similarly, using the integrability of $J_2$,  
\[
J_3[J_1 \g^1,J_2 \g^1]=J_1J_2[J_1 \g^1,J_2 \g^1]=J_1[J_3 \g^1,J_2 \g^1].
\]
Thus, all three summands coincide and we obtain
that $\g^\H_2 \subset \g^1+J_1 [J_2 \g^1,J_3 \g^1]\subset \g^1+J_1\g^1$. Therefore, 
\[
\g_3^\H=\H [\g^\H_2,\g^\H_2]\subset \H [J_1 \g^1,J_1 \g^1]=0,
\]
since $J_1 \mathfrak{g}^1$ is abelian by Lemma~\ref{lem:nilpotencystep}.
\end{proof}
\section{Holonomy of 2-step hypercomplex nilmanifolds} \label{section:holonomy}
We prove a structural result about the holonomy algebra of the Obata connection on $2$-step nilpotent Lie groups endowed with invariant hypercomplex structures.
\begin{theorem} \label{thm:holonomy}
Let $G^{4n}$ be a $2$-step nilpotent Lie group endowed with an invariant hypercomplex structure $\hcx$. Then the holonomy Lie algebra $\mathfrak{hol}(\nabla)$ is abelian, where $\nabla$ is the Obata connection of $(G, \hcx)$. \par
Moreover, $A \cdot B=0$ for any $A,B \in \mathfrak{hol}(\nabla)$, therefore, $\mathfrak{hol}(\nabla)$  is  a commutative associative algebra with trivial product.
\end{theorem}
\begin{proof}
The proof is divided into $4$ steps. We start by proving 
\begin{itemize}
\item \textbf{Step 1}: for any $X,Y,Z \in \g$, $R(X,Y)Z \in  \g^1 \cap J_2 \g^1+J_1 \g^1 \cap J_3 \g^1$. \par
For $(\alpha,\beta,\gamma)=(1,2,3)$, each term appearing in  Formula~\eqref{eqn:obatacurvature}  falls into one of four distinct types:
\begin{enumerate}
    \item $J_2 [\cdot, J_2[\cdot,\cdot]]=[J_2 \cdot,J_2[\cdot,\cdot]] \in \g^1 \cap J_2 \g^1$, where the equality follows from Equation \eqref{eqn:integrability2},
    \item $[\cdot, J_2[\cdot,\cdot]] \in \g^1 \cap J_2 \g^1$, as $J_2 \g^1$ is an ideal by Lemma \ref{lem:nilpotencystep},
    \item $J_3 [\cdot, J_2[\cdot,\cdot]]=J_1J_2 [\cdot, J_2[\cdot,\cdot]]=J_1[J_2 \cdot,J_2[\cdot,\cdot]] \in J_1\g^1 \cap J_3 \g^1$, where the first equality follows by the quaternionic identities and the second equality follows from Equation \eqref{eqn:integrability2}, 
    \item $J_1 [\cdot, J_2[\cdot,\cdot]]=-J_3J_2 [\cdot, J_2[\cdot,\cdot]]=-J_3[J_2 \cdot,J_2[\cdot,\cdot]] \in J_1\g^1 \cap J_3 \g^1$, where the first equality follows by the quaternionic identities and the second equality follows again from Equation \eqref{eqn:integrability2}. 
\end{enumerate}
We set $\h:=\g^1 \cap J_2 \g^1+J_1 \g^1 \cap J_3 \g^1$. It is not difficult to note that $\h$ is $\hcx$-invariant and $\h \subset \g_1^\H \subsetneq \g$.
\item \textbf{Step 2}: for any $X \in \h$, $\nabla X=0$. \par
We use the following:
\begin{lemma} \label{lemma:preliminarlemma}
For any $X \in \h$ and $Y \in \g$, 
\begin{equation} \label{eqn:obataparallel}
[Y,X]+J_1[J_1 Y,X]=0
\end{equation}
\end{lemma}
\begin{proof}
Let $X \in \h$. Then $X=X_1+X_2$, where $X_1 \in \g^1 \cap J_2 \g^1$ and $X_2 \in J_1 \g^1 \cap J_3 \g^1$. Since $X_1 \in \g^1$,  
\begin{align*}
[Y,X]+J_1[J_1 Y,X] & =[Y,X_2]+J_1[J_1 Y,X_2] \\
& = [Y,X_2]-[Y,X_2]+[J_1Y,J_1X_2]-J_1[Y,J_1X_2] \\
& = 0
\end{align*}
since $J_1X_2\in \g^1\subset \z$, where we have used the integrability of $J_1$ in the second equality.
\end{proof}
Let $Y \in \g$ and $X \in \h$. Applying Formula \eqref{eqn:Obata}
\begin{align*}
2\nabla_{Y} X &=[Y,X]+J_1[J_1 Y,X]-J_2[Y,J_2X]+J_3[J_1Y,J_2X]\\
&= [Y,X]+J_1[J_1 Y,X]-J_2([Y,J_2X]+J_1[J_1Y,J_2X])\\
&= 0,
\end{align*}
where we used Lemma \ref{lemma:preliminarlemma} twice and the $\hcx$-invariance of $\h$.
\item \textbf{Step 3:} Expression for the curvature endomorphisms.
Using Steps~1 and~2, we conclude that for any $X,Y\in\mathfrak{g}$,
\[
R(X,Y) \in \bigl\{ A \in \mathfrak{gl}(n,\mathbb{H}) \mid
\operatorname{Im}(A) \subset \mathfrak{h} \subset \ker(A) \bigr\}.
\]
Moreover, since $R(X,Y)^2=0$, we have that $R(X,Y)\in \mathfrak{sl}(n,\mathbb{H})$ and therefore 
\[
R(X,Y) \in \bigl\{ A \in \mathfrak{sl}(n,\mathbb{H}) \mid
\operatorname{Im}(A) \subset \mathfrak{h} \subset \ker(A) \bigr\}=: \mathfrak{a}.
\]
\item \textbf{Step 4:} Application of the Ambrose-Singer Theorem. \par
We recall the following version of the Ambrose-Singer Theorem for left-invariant connections.
\begin{theorem}\label{thm:AS2}{\textup{\cite{Al}}}
Let $\nabla$ be a left-invariant affine connection on the Lie group $G$, and let $\g$ denote the Lie algebra of $G$. Then the holonomy algebra $\mathfrak{hol}(\nabla)$ based at the identity element $e\in G$ is the smallest subalgebra of $\mathfrak{gl}(\g)$ containing the curvature endomorphisms $R(X,Y)$ for any $X,Y\in\g$, and closed under commutators with the left multiplication operators $\nabla_X:\g\to \g$.
\end{theorem}
Let us consider $\text{span}\{R(X,Y) \mid X,Y \in \g \}$. As already observed in Step $3$,
\[
\text{span}\{R(X,Y) \mid X,Y \in \g \}\subset \a.
\] 
We point out that $\a$ is an abelian subalgebra of $\mathfrak{sl}(n,\H)$.\par
Let $A \in \a$ and let $X \in \g$.
\[
[ A, \nabla_X]\in \a.
\]
In fact, let $Y \in \g$ and $Y' \in \h$:
\[
A \nabla_X Y -\nabla_X AY\subset A(\g_1^\H)-\nabla_X \h =A(\g_1^\H) \subset \h, 
\]
that is, $\mathrm{Im}([ A, \nabla_X])\subset \h$, and 
\[
A \nabla_X Y' -\nabla_X AY'=0, 
\]
implying that $\h \subset \ker([ A, \nabla_X])$. Moreover, $[A,\nabla_X]\in \mathfrak{sl}(n,\H)$ since $\mathfrak{sl}(n,\H)$ is an ideal of $\mathfrak{gl}(n,\H)$.\par
Since $\a$ is closed under commutators with $\nabla_\g$, by Theorem \ref{thm:AS2} we get that $\mathfrak{hol}(\nabla) \subset \a$. In particular, $\a$ is abelian and has trivial product.
\end{itemize}
\end{proof}
\begin{example}
Let $(\mathfrak{g}, \{J_\alpha\})$ be the $2$-step nilpotent hypercomplex Lie algebra constructed in Example~\ref{ex:nonflat} and let us consider the corresponding $2$-step nilpotent hypercomplex Lie group $G$. As already observed, the Obata connection $\nabla$ on $G$ is not flat. A direct computation shows that the holonomy algebra $\mathfrak{hol}(\nabla)$ is given by
\[
\mathfrak{hol}(\nabla) = \left\{ 
\begin{pmatrix}
\begin{array}{c|c|c}
0 & 0 & 0 \\ \hline
0 & 0 & 0 \\ \hline
q & p \cdot j & 0
\end{array}
\end{pmatrix}
\in \mathfrak{sl}(3, \mathbb{H}) \ | \ q \in \mathbb{H},\ p \in \mathbb{R} 
\right\},
\]
where we are considering $\g=\text{span}_\H \{e_1,e_5,e_9\}$.\par
This provides an explicit example of a holonomy algebra arising from a non-flat Obata connection on a $2$-step nilmanifold such that $\mathfrak{hol}(\nabla)$ is abelian and non-trivial.
\end{example}

The next lemma is one of the key ingredients in the proof of Theorem \ref{thm:trivialhol} below.

\begin{lemma} \label{lem:nabla_nil}
Let $G^{4n}$ be a $2$-step nilpotent Lie group endowed with an invariant hypercomplex structure $\hcx$. Then for any $X,Y \in \g$, $\nabla_X Y \in \g_1^\H$ and $\nabla_X \g_1^\H \subset \h$. In particular, for any $X\in \g$, $\nabla_X$ is nilpotent. 
\end{lemma}
\begin{proof}
The first claim is straightforward by Formula \eqref{eqn:Obata}.\par
For the second claim, since $\nabla_X J_\alpha=J_\alpha \nabla_X$ and $\h$ is $\hcx$-invariant, it suffices to prove that $\nabla_X \g^1 \subset \h$. Applying Formula \eqref{eqn:Obata}, we see that for any $Y \in \g^1$:
\[
\nabla_X Y=\frac{1}{2}\big(- J_2[X,J_2Y]+J_3[J_1 X, J_2 Y]\big).
\]
With the same argument used in the proof of Step 1 in Theorem \ref{thm:holonomy}, the first term lies in $\g^1 \cap J_2 \g^1$, while the second term lies in $J_1 \g^1 \cap J_3 \g^1$. The proof follows by the definition of $\h$.\par
The fact that $\nabla_X$ is nilpotent follows since $(\nabla_X)^3=0.$ In fact for any $Y \in \g$
\[
\nabla_X(\nabla_X(\nabla_X Y))\in \nabla_X(\nabla_X \g_1^\H) \subset \nabla_X \h=0,
\]
by Step 2 in the proof of Theorem \ref{thm:holonomy}.
\end{proof}

\subsection{Trivial Obata holonomy} Let $G$ be a $2$-step nilpotent Lie group endowed with an invariant hypercomplex structure $\hcx$. If the Obata connection is flat, then the full holonomy group is discrete. A natural question is when the holonomy group is trivial; in this section we prove in Theorem~\ref{thm:trivialhol} below that it is never the case.

\smallskip

In order to prove this, we recall some useful facts on affine connections. Let $M$ be a connected $n$-dimensional manifold with an affine connection $\nabla$. We define the following subspace of the space $\mathfrak X (M)$ of smooth vector fields on $M$:
\begin{equation}\label{eqn:p-nabla} 
\mathcal P^\nabla=\{X \in \mathfrak{X}(M)\mid \nabla X =0\}.
\end{equation}
The subspace $\mathcal P^\nabla$ is in general neither finite dimensional as an $\R$-vector space nor closed under the Lie bracket of vector fields. However, under additional assumptions both conditions are satisfied, as the following well known lemma shows  (see \cite[Proposition 2.2]{Wolf1} for the first item and \cite[p. 323]{Wolf1} for the second one).
\begin{lemma} [\cite{Wolf1}] \label{lem:parallel-is-Lie-subalg}
Let $M$ be a connected $n$-dimensional manifold with an affine connection $\nabla$.
\begin{enumerate}
    \item[$(1)$] If the holonomy group $\operatorname{Hol}(\nabla)$   is trivial, then $\mathcal P^\nabla$ is an $n$-dimensional real vector space. 
    \item[$(2)$] The space $\mathcal P^\nabla$ is a Lie subalgebra of $\mathfrak{X}(M)$ if and only if the torsion $T$ of $\nabla$ is parallel.
\end{enumerate}
\end{lemma}

In case the manifold admits an almost complex structure $J$ and an affine connection $\nabla$ with trivial holonomy, we will make use of the following result.

\begin{lemma}[{\cite[Lemma 3.2]{ABD}}] \label{lem:paralle-is-J-stable}
    Let $M$ be a connected manifold with an almost complex structure $J$. Assume that there exists an affine connection $\nabla$ on M such that  $\operatorname{Hol}(\nabla)$   is trivial. Then the following conditions are equivalent: 
    \begin{enumerate}
        \item[$(1)$]
    $\nabla J = 0$; 
    \item[$(2)$] the space $\mathcal P^\nabla$  of parallel vector fields on $M$ is $J$-invariant, that is,  $J (\mathcal P^\nabla )=\mathcal P^\nabla $.
    \end{enumerate}
\end{lemma}

Any Lie group $H$ admits a natural left-invariant connection with trivial holonomy: it is defined by $\nabla^0_XY=0$ for all $X,Y$ left-invariant vector fields on $H$, and it is
known as the $(-)$-connection (see, for instance, \cite[Chapter X, Proposition~2.12]{KN}). Note that its torsion is given by $T^0(X,Y)=-[X,Y]$ for $X,Y$ left-invariant vector fields on $H$. Moreover, $\nabla^0$ is flat, complete, and clearly  $\nabla^0T^0=0$ by definition of $\nabla^0$. \par
If $\Lambda \subset H$ is any discrete subgroup of $H$, then $\nabla^0$ on $H$ induces a unique connection, still denoted $\nabla^0$, on $\Lambda \backslash H$ such that the projection $\pi:H\to\Lambda \backslash H$ is an affine local diffeomorphism. This connection $\nabla^0$ on $\Lambda \backslash H$ is geodesically complete, has trivial holonomy and its torsion is parallel. \par
If $J_0$ is a left-invariant complex structure on $H$ then, by definition of $\nabla^0$, it follows that $\nabla^0 J_0=0$. The complex structure $J_0$ on $H$  induces a unique complex structure, still denoted $J_0$, on $\Lambda \backslash H$ such that $\pi$ is a local biholomorphism. Since $\pi$ is also affine, we  have that $\nabla^0 J_0=0$ on $\Lambda \backslash H$.  Therefore, $(\Lambda\backslash H,J_0)$ carries a geodesically  complete connection $\nabla^0$
with trivial holonomy and parallel torsion such that $\nabla^0 J_0=0$. See \cite{ABD} for further details.

\begin{theorem} [{\cite[Theorem 4.1]{ABD}}] \label{discrete}
The triple $(M,J,\nabla)$ where $M$ is a connected manifold endowed with a complex structure $J$ and a connection
$\nabla$ with $\nabla J=0$ and trivial holonomy is affinely biholomorphic to a triple $(\Lambda\backslash H, J_0, \nabla^0)$ as above if and only if $\nabla$ is geodesically complete and its torsion is parallel.\par
The Lie group $H$ is the connected and simply connected Lie group with Lie algebra $\mathcal P^\nabla$ defined in \eqref{eqn:p-nabla}, and $J_0$ is the complex structure on $\Lambda \backslash H$ induced by the restriction of $J$ to $\mathcal P^\nabla$.
\end{theorem}

As a consequence of Lemmas \ref{lem:nabla_nil}, \ref{lem:parallel-is-Lie-subalg} and \ref{lem:paralle-is-J-stable}  and Theorem \ref{discrete}  we obtain the main result of this section.

\begin{theorem}  \label{thm:trivialhol}
Let $(\Gamma \backslash G,\hcx)$ be an at most 2-step nilmanifold endowed with an invariant hypercomplex structure $\hcx$ such that the Obata connection $\nabla$ is flat. Then the holonomy group $\operatorname{Hol}(\nabla)$ is trivial if and only if $\Gamma \backslash G$ is a torus. \par
In particular, if $(\Gamma \backslash G,\hcx)$ is a $2$-step nilmanifold endowed with an invariant hypercomplex structure such that the Obata connection $\nabla$  is flat, then the holonomy group $\operatorname{Hol}(\nabla)$ is never trivial.
\end{theorem}
\begin{proof} 
The implication from right to left is clear. Let us now prove the converse. \par
The Obata connection $\nabla$ satisfies $\nabla J_\alpha =0$ for all $\alpha$ and has vanishing (and hence parallel) torsion. By assumption, $\operatorname{Hol}(\nabla)$ is trivial, therefore, Lemma \ref{lem:parallel-is-Lie-subalg}  implies that the space $\mathcal P^\nabla$ of smooth parallel vector fields on $\Gamma\backslash G$ is a $4n$-dimensional Lie algebra, with $4n:=\dim \Gamma\backslash G =\dim G$, and it follows from Lemma \ref{lem:paralle-is-J-stable} that $J_\al (\mathcal  P^\nabla )=\mathcal P^\nabla $ for all $\al$. We denote by $H$  the connected and simply connected Lie group with Lie algebra $\mathcal P^\nabla$.    

On the other hand, it follows from Lemma \ref{lem:nabla_nil} that, for any $X \in \g$, the endomorphism $\nabla_X: \g \to \g$ is nilpotent. As the Obata connection $\nabla$ is flat and torsion-free, it follows from \cite[Theorem 2.2 and Proposition 1.2]{Kim} that $\nabla$ is geodesically complete on $G$, hence on $\Gamma\backslash G$.  \par
Since $\nabla$ is flat, torsion-free, geodesically complete and $\nabla J_\alpha =0$ for all $\al$, we may apply Theorem \ref{discrete} to conclude that $(\Gamma \backslash G,J_\alpha, \nabla)$ is affinely biholomorphic to $(\Lambda \backslash H,J_\alpha^0, \nabla^0)$ for each $\al$, where  $\Lambda$ is  a lattice in $H$, $J_\alpha^0$ is the complex structure induced by $J_\alpha$ on $\Lambda \backslash H$  and $\nabla^0$ is the $(-)$-connection on $\Lambda \backslash H$. \par
As $\nabla$ is torsion free, so is $\nabla^0$, therefore,  we have that $[X,Y]=0$ for any  $X, Y$ vector fields on $\Lambda \backslash H$ induced by left-invariant vector fields on $H$. In other words, $[X,Y ]=0$ for any $X, Y \in \mathcal P^\nabla$. In particular, $H$ is abelian and therefore $\Lambda \backslash H$ is a torus. \par
Since $\Gamma \backslash G$ is diffeomorphic to  $\Lambda \backslash H$ and both $G$ and $H$ are nilpotent, then $G$ is isomorphic to $H$ as a Lie group \cite{Mal}. It then follows that $G$ is abelian, and so $\Gamma \backslash G$ is also a torus. 
\end{proof}
\begin{remark}
Theorem \ref{thm:trivialhol} still holds true if instead of considering a 2-step nilmanifold we assume that $\Gamma\backslash G$ is a solvmanifold with $G$ a completely solvable almost abelian Lie group. Indeed, in this case the Obata connection is automatically flat, according to \cite[Proposition 3.7]{AB1}, and it  is complete, due to \cite[Corollary 3.8]{AB1}. For the last paragraph, we use the Saito rigidity theorem \cite{Sai}, which extends the result of Mal'cev from nilpotent to completely solvable Lie groups.
\end{remark}

\section{Finale} If $(\g, \hcx )$ is a hypercomplex Lie algebra such that $\g$ is $2$-step nilpotent, we have seen that the Obata connection is flat when at least one complex structure $J_\alpha$ satisfies 
\begin{equation}  \label{eqn:J-gen-2-step}  
J_\alpha \mathfrak{g}^1 \subset \mathfrak{z}.\end{equation} We now investigate to what extent this condition can be extended to Lie algebras of higher nilpotency step. It turns out that \eqref{eqn:J-gen-2-step} forces the Lie algebra to be at most $3$-step nilpotent.

\begin{lemma} \label{lem:at-most-3-step} 
\begin{enumerate} \item[]
    \item[$(1)$]  
Let $(\g, J)$ be a Lie algebra endowed with a complex structure $J$ satisfying condition \eqref{eqn:J-gen-2-step}. Then $\g$ is at most $3$-step nilpotent and $\g^1$ is abelian.
\item[$(2)$] Let $(\g, \{J_1, J_2, J_3\})$ be a hypercomplex Lie algebra  and assume that $(\alpha , \beta , \gamma)$ is a cyclic permutation of $(1,2,3)$ such that   $J_\alpha$ and $J_\beta$    satisfy \eqref{eqn:J-gen-2-step}.   Then  $J_\gamma \g^1\subset \z (\g^1 )$ and  $\nabla_X Y =0$ for any $Y\in \g^1$, where $\nabla$ is the Obata connection.
    \end{enumerate}
\end{lemma}
\begin{proof}
 In case (1), by the integrability of $J$ and the assumption $J \g^1 \subset \z$, one obtains
\begin{equation} \label{eqn:integrability3}
[[X,Y], Z] = -J[[X,Y], J Z], \quad \text{for any } X,Y,Z\in \g,
\end{equation}
which shows that $\g^2 \subset \z$, and hence $\g^3 = [\g^2, \g] = 0$. Note that by Equation \eqref{eqn:integrability3}, $[\g^1,\g^1]=0$. \par 
To prove (2), we may assume that $J_1$ and $J_2$ satisfy \eqref{eqn:J-gen-2-step}. We start by showing that $J_3 \g^1\subset  \z (\g^1 )$, that is,  $[[X,Y],J_3 [ U, Z]]=0$ for any $X,Y,U,Z\in \g $. Indeed, using the integrability of $J_1$:
\begin{align*}
[[X,Y],J_3[U,Z]] & = -J_1[J_1[X,Y],J_3[U,Z]]-J_1[[X,Y],J_1J_3[U,Z]]+[J_1[X,Y],J_1J_3[U,Z]]\\
& = -J_1[J_1[X,Y],J_3[U,Z]]+J_1[[X,Y],J_2[U,Z]]-[J_1[X,Y],J_2[U,Z]]\\
& = 0,
\end{align*}
since $J_1 \g^1 \subset \z$ and $J_2 \g^1 \subset \z$.
The second part of (2) follows by computing   $\nabla _X [V,W]$ for  any $X, V, W \in \g$ using \eqref{eqn:Obata} and by applying  the fact that both $J_1$ and $J_2$ satisfy \eqref{eqn:J-gen-2-step} and \eqref{eqn:integrability3}.
\end{proof}

Since in the $2$-step case the corresponding Obata connection is flat (see Corollary~\ref{cor:flat}), we will focus our attention on the $3$-step case. We observe that the hypercomplex structure must be non-abelian. Indeed, if $\hcx$ is abelian and $J_\alpha \g^1 \subset \z$ for at least one $\alpha$, then
\[
\g^1 \subset J_\alpha \z = \z,
\]
which forces $\g$ to be $2$-step nilpotent.\par 
Examples of 3-step nilpotent Lie algebras with hypercomplex structures have been given in \cite{DF2,BD,AB2}. Among these, the only examples with non-abelian hypercomplex structures are those in \cite{AB2}; however, those are Obata flat and none of them satisfies the hypothesis of Lemma \ref{lem:at-most-3-step} (2). \par
 
We proved in Theorem~\ref{thm:holonomy} that for hypercomplex 2-step nilmanifolds the holonomy of the Obata connection is an abelian Lie algebra, which is also  a commutative associative algebra with trivial product. This theorem requires no additional assumption on the hypercomplex structure. Next, we prove  a similar result  without assuming 2-step nilpotency, but requiring an extra condition on the hypercomplex structure, namely, that $\{J_1, J_2 , J_3\} $ satisfies the hypothesis of Lemma~\ref{lem:at-most-3-step} (2).

\begin{proposition} \label{prop:holonomy2} 
Let $(\g,\{J_1, J_2 , J_3\})$ be a $4n$-dimensional hypercomplex Lie algebra  and assume that $(\al , \beta ,\gamma)$ is a cyclic permutation  of $(1,2,3)$   such that $J_\al$ and $J_\beta$    satisfy \eqref{eqn:J-gen-2-step}. Then:
\begin{enumerate} 
\item[$(1)$] the curvature tensor of the Obata connection is given by 
$R(X,Y)=-\ad_{[X,Y]}$,
hence  $(\g,\hcx)$ is non-flat if and only if $\g$ is $3$-step nilpotent; 
\item[$(2)$]   the holonomy algebra $ \mathfrak{hol}(\nabla)$ of the Obata connection $\nabla$ on $G$ is an abelian subalgebra of $\mathfrak{sl}(n,\mathbb{H})$, where  $G$ is the  connected and simply connected  Lie group with Lie algebra $\g$. Moreover, $A \cdot B=0$ for any $A,B \in \mathfrak{hol}(\nabla)$, therefore, $\mathfrak{hol}(\nabla)$  is  a commutative associative algebra with trivial product.
\end{enumerate} 
\end{proposition}
\begin{proof}
To prove (1), we first show that $\nabla_X \nabla_Y Z =0$. 
We start by computing $\nabla_Y Z$  
applying Formula \eqref{eqn:Obata}. Since $\nabla_X$ commutes with $J_\al$ for all $\alpha$, Lemma~\ref{lem:at-most-3-step} (2) implies that $\nabla_X \nabla_Y Z =0$.
Therefore,
\[
R(X,Y)Z=-\nabla_{[X,Y]}Z=-\nabla_Z [X,Y] +[Z,[X,Y]]= - [[X,Y], Z],
\]
where the second equality follows from the fact that $\nabla$ is torsion-free.  \par
In order to prove (2), set \[ \mathfrak{b}:=\text{span}\{\ad_{[X,Y]} \mid X,Y \in \g\}  .\] Next, we show  that  $\mathfrak{b}$ is a commutative associative  algebra with trivial product and that $\mathfrak{hol}(\nabla)=\mathfrak b $. 
 In fact, 
\[
\ad_{[X,Y]}\circ\ad_{[Z,W]}U=[[X,Y],[[Z,W],U]]=0, 
\]
as $\g^1$ is abelian (Lemma \ref{lem:at-most-3-step} (1)). Therefore, $\mathfrak{b}$ is a commutative associative   algebra with trivial product.

Next, we show  that $ \mathfrak{hol}(\nabla)=\mathfrak b $. 
From Lemma~\ref{lem:at-most-3-step} (2), we have that 
\[ 
\nabla_Z \ad_{[X,Y]} =0. \]
On the other hand,    
\begin{align*}
2\ad_{[X,Y]} \nabla_Z  U&=\ad_{[X,Y]}J_3 [{J_1 Z} , J_2 U] = 0,   
\end{align*}
where the last equality follows from $J_3\g^1\subset \z (\g^1)$ (Lemma \ref{lem:at-most-3-step} (2)). 
By Theorem \ref{thm:AS2}, $\mathfrak{hol}(\nabla)=\mathfrak{b}$, which implies that $\mathfrak{hol}(\nabla)$ is an abelian Lie subalgebra of $\mathfrak{sl}(n,\H)$, which is also an associative commutative algebra with trivial product, concluding the proof.
\end{proof}

With techniques similar to those in Section \ref{sec:examples}, we can construct an example of a $3$-step nilpotent Lie algebra endowed with a hypercomplex structure $\hcx$ such that $J_\alpha \g^1 \subset \z$ for each~$\alpha$.
\begin{example}
Let $\g$ be the $3$-step nilpotent Lie algebra defined by the structure equations
\[
\begin{split}
    &de^i =0, \ i=1,\dots,4, \ \ de^5 =e^{1,2}-e^{3,4}, \\ 
    &de^i =0, \ i=6,7,8, \ \  de^9  =e^{1,3}-e^{4,2}, \\
    &de^{i}=0,\  i=10, 11, 12, \ \ de^{13}=-e^{2,5}+e^{3,9}, \\
    &de^{14}=e^{1,5}+e^{4,9}, \ \ de^{15}=e^{4,5}-e^{1,9}, \ \ de^{16}=-e^{3,5}-e^{2,9}.
\end{split}
\]
Then $\g$ can be endowed with the following hypercomplex structure 
\begin{align*}
&J_1 e_1=e_2, \ J_1 e_3=e_4, \ J_1 e_5= e_6, \ J_1 e_7=e_8, \\
&J_1 e_9=e_{10}, \ J_1 e_{11}=e_{12},\, J_1 e_{13}=e_{14}, \ J_1e_{15}=e_{16}, \nonumber\\
&J_2 e_1=e_3, \ J_2 e_2=-e_4, \ J_2 e_5= e_7, \ J_2 e_6=-e_8, \\ 
&J_2 e_9=e_{11}, \ J_2 e_{10}=-e_{12}, J_2 e_{13}=e_{15}, \ J_2 e_{14}=-e_{16},\\
&J_3e_1=e_4, \ J_3 e_2=e_3, \ J_3 e_5= e_8, \ J_3 e_6=e_7, \\ 
&J_3 e_9=e_{12}, \ J_3 e_{10}=e_{11}, \ J_3 e_{13}=e_{16}, \ J_3 e_{14}=e_{15}, \nonumber
\end{align*}
satisfying $J_\alpha \g^1 \subset \z$, for each $\alpha=1,2,3$. Moreover, since $\g$ is $3$-step, Proposition~\ref{prop:holonomy2}  implies that the Obata connection on the associated connected and simply connected nilpotent Lie group is not flat. 
\end{example}
Theorem \ref{thm:holonomy} and Proposition \ref{prop:holonomy2} highlight an unexpected rigidity in the holonomy algebra of the Obata connection.  A related result is proved in \cite{Ge}, where the author shows that if the hypercomplex structure is abelian then the holonomy algebra of the Obata connection is abelian. \par
In the $3$-step nilpotent setting, as recalled above, all previously known examples were either Obata-flat or equipped with an abelian hypercomplex structure (the example constructed in this section is neither Obata-flat nor endowed with an abelian hypercomplex structure). Nevertheless, in every known case, the holonomy algebra of the Obata connection is abelian. This recurring behavior suggests that such rigidity might be a general feature of the nilpotent setting, motivating the following question:
\begin{question}
Let $G$ be a nilpotent Lie group endowed with a left-invariant hypercomplex structure. Is it true that the holonomy algebra of the Obata connection is always abelian?
\end{question}


\noindent
\begin{flushleft}

\fontsize{10pt}{10pt}\selectfont
A. Andrada, M. L. Barberis:  \textsc{FAMAF, Universidad Nacional de Córdoba and CIEM-CONICET,}\\
\textsc{Av. Medina Allende s/n, Ciudad Universitaria, X5000HUA Córdoba, Argentina}\\
\texttt{adrian.andrada@unc.edu.ar} \\
\texttt{mlbarberis@unc.edu.ar}

\vspace{0.9em}
B. Brienza: \textsc{Dipartimento di Matematica ``G. Peano'', Università degli Studi di Torino,}\\
\textsc{Via Carlo Alberto 10, Torino, Italy}\\
\texttt{beatrice.brienza@unito.it}
\end{flushleft}

\end{document}